\documentclass[11pt,a4paper]{article}

\usepackage[T1]{fontenc}
\usepackage[utf8]{inputenc}
\usepackage[bottom=1in, top=1.5in,right=2cm,left=2cm]{geometry}
\usepackage{amssymb}
\usepackage{csquotes}
\usepackage{amsfonts}

\usepackage[labelfont=bf]{caption}
\usepackage{subcaption}
\captionsetup[algorithm]{labelfont=rm,labelsep=period}
\usepackage{microtype}
\usepackage{lmodern}
\usepackage[USenglish]{babel}
\usepackage{hyperref}
\usepackage{flafter}
\usepackage{amsthm}
\usepackage{amsmath}
\usepackage{amssymb}
\usepackage{madefs}%
\usepackage{algorithm}
\usepackage{algpseudocode}
\usepackage[mathscr]{eucal}
\usepackage{pgfplots,tikz}
\pgfplotsset{compat=newest}
\usetikzlibrary{plotmarks}
\usepackage{tcolorbox}
\usetikzlibrary{matrix,arrows}
\usepackage{bm}
\usepackage{enumerate} 
\usepackage[capitalize]{cleveref}
\crefname{definition}{Definition}{Definitions}
\crefname{rem}{Remark}{Remarks}
\crefname{cor}{Corollary}{Corollaries}
\crefname{thm}{Theorem}{Theorems}
\crefname{alg}{Algorithm}{Algorithms}
\crefname{lem}{Lemma}{Lemmas}
\crefname{exa}{Example}{Examples}
\crefname{pro}{Proposition}{Propositions}
\crefname{defpro}{Definition and Proposition}{}
\crefname{equation}{Equation}{Equations}
\theoremstyle{plain}
\newtheorem{definition}{Definition}[section]
\newtheorem{rem}{Remark}
\newtheorem{exa}{Example}
\theoremstyle{plain}

\newtheorem{thm}[definition]{Theorem}

\newtheorem{pro}[definition]{Proposition}

\newlength\figureheight
\newlength\figurewidth

\hypersetup{
    colorlinks,
    linkcolor={blue!50!black},
    citecolor={green!50!black},
    urlcolor={red!80!black}
}

\date{\today}
\author{
	{\large Abdul-Lateef Haji-Ali\textsuperscript{a}, Fabio Nobile\textsuperscript{b}, Raúl Tempone\textsuperscript{c}, Sören Wolfers\textsuperscript{c}\footnote{Corresponding author. Email address: \texttt{soeren.wolfers@kaust.edu.sa}}}\\[0.5em]
{\small \textsuperscript{a} Oxford University}\\
{\small \textsuperscript{b}  École Polytechnique Fédérale de Lausanne (EPFL), CSQI-MATH}\\
{\small \textsuperscript{c} King Abdullah University of Science and Technology (KAUST), Applied Mathematics and Computational Science}\\
}

\title{Multilevel weighted least squares polynomial approximation}

\newcommand{\work}{\text{Work}}
\newcommand{\mix}{\text{mix}}

\newcommand{\smol}{\mathcal{S}}

\newcommand{\norm}[2]{\|#1\|_{#2}}


\newcommand{\cond}{c}

\newcommand{\expect}{\mathbb{E}}
\newcommand{\prob}{\mathbb{P}}
\newcommand{\measure}{\mu}
\newcommand{\samplingmeasure}{\nu}
\newcommand{\density}{\rho}
\newcommand{\optimaldistribution}{\samplingmeasure^{*}}
\newcommand{\optimaldensity}{\density^{*}}
\newcommand{\weight}{w}
\newcommand{\optimalweight}{\weight^{*}}
\newcommand{\arcsine}{p^{\infty}}

\newcommand{\p}{v}
\newcommand{\pss}{\mathcal{P}}
\newcommand{\onb}{B}
\newcommand{\leg}{P}
\newcommand{\her}{H}
\newcommand{\vsp}{V}
\newcommand{\dvsp}{m}
\newcommand{\NS}{N}
\renewcommand{\P}{\Pi}
\newcommand{\coeff}{c}

\newcommand{\neighbors}{\mathcal{N}}
\newcommand{\mi}{\mathbf{\k}} 
\newcommand{\mis}{\mathcal{I}}
\newcommand{\mia}{\mathcal{A}}
\newcommand{\mip}{\eta} 
\newcommand{\mipp}{\eta'} 

\newcommand{\ps}{\gamma}
\newcommand{\psmi}{\bm{\ps}}
\newcommand{\domPS}{\Gamma}
\newcommand{\dps}{d}

\newcommand{\rs}{f}
\newcommand{\f}{g}
\newcommand{\n}{n}
\newcommand{\QoI}{Q}
\newcommand{\pde}{u}
\newcommand{\rhs}{g}

\newcommand{\dpde}{D}

\newcommand{\genericmeasure}{\pi}
\newcommand{\gap}{g}
\newcommand{\mathup}[1]{\text{\textup{#1}}}
\newcommand{\dimp}{\sigma}

\renewcommand{\k}{k}
\renewcommand{\L}{L}

\newcommand{\F}{F}

\newcommand{\rate}{\lambda}
\newcommand{\lrate}{t}

\newcommand{\uqsummability}{\alpha}

\renewcommand{\sc}{\beta_{s}}
\newcommand{\wc}{\beta_{w}}

\algdef{SE}[DOWHILE]{Do}{doWhile}{\algorithmicdo}[1]{\algorithmicwhile\ #1}%

\begin{document}
\maketitle
\abstract{
Weighted least squares polynomial approximation uses random samples to determine projections of functions onto spaces of polynomials. It has been shown that, using an optimal distribution of sample locations, the number of samples required to achieve quasi-optimal approximation in a given polynomial subspace scales, up to a logarithmic factor, linearly in the dimension of this space. However, in many applications, the computation of samples includes a numerical discretization error. Thus, obtaining polynomial approximations with a single level method can become prohibitively expensive, as it requires a sufficiently large number of samples, each computed with a sufficiently small discretization error. As a solution to this problem, we propose a multilevel method that utilizes samples computed with different accuracies and is able to match the accuracy of single-level approximations with reduced computational cost. We derive complexity bounds under certain assumptions about polynomial approximability and sample work. Furthermore, we propose an adaptive algorithm for situations where such assumptions cannot be verified a priori. Finally, we provide an efficient algorithm for the sampling from optimal distributions and an analysis of computationally favorable alternative distributions. Numerical experiments underscore the practical applicability of our method.\\

 \textbf{Keywords} Multilevel methods, least squares approximation, multivariate approximation, polynomial approximation, convergence rates, error analysis \\
\textbf{Mathematics Subject Classification (2010)}
41A10, 
41A25, 
41A63, 
65B99, 
65D10, 
65N12, 
65N15, 
65N22, 
65N35 
 }
\pagenumbering{roman}
\pagestyle{headings}
\pagenumbering{arabic}

\section{Introduction}

A common goal in uncertainty quantification \cite{LeMaitreKnio2010} is the approximation of response surfaces
\begin{equation*}
\psmi\mapsto \rs(\psmi):=\QoI(\pde_{\psmi})\in\R,
\end{equation*}
which describe how a quantity of interest $\QoI$ of the solution $\pde_{\psmi}$ to some partial differential equation (PDE) depends on parameters $\psmi\in\domPS\subset\R^\dps$ of the PDE. The non-intrusive approach to this problem is to evaluate the response surface for finitely many values of $\psmi$ and then to use an interpolation method, such as (tensor-)spline interpolation \cite{deb2001solution}, kernel-based approximation (kriging) \cite{WolfersNobileTempone2016a,GriebelRieger2015}, or (global) polynomial approximation \cite{LeMaitreKnio2010}.

 In this work, we study a variant of polynomial approximation in which least squares projections onto finite-dimensional polynomial subspaces are computed using values of $\rs$ at finitely many random locations. More specifically, given a probability measure $\measure$ on the parameter space $\domPS$ and a polynomial subspace $\vsp\subset L^2_{\measure}(\domPS)$, the approximating polynomial is determined as
\begin{equation}\label{eq:P0}
\P_\vsp \rs:=\argmin_{\p\in \vsp}\norm{\rs-\p}{\NS},
\end{equation}
where $\norm{\cdot}{\NS}$ is a discrete approximation of the $L^2_{\measure}(\domPS)$ norm that is based on evaluations in finitely many randomly chosen sample locations $\psmi_j\in\domPS$, $j\in\{1,\dots,\NS\}$ and a weight function $\weight\colon\domPS\to\R$. 

The case where equally weighted samples are drawn independently and identically distributed from the underlying probability measure itself, $\psmi_j\sim\measure$, has been popular among practitioners for a long time and has been given a thorough theoretical foundation in the past decade  \cite{ChkifaCohenMiglioratiEtAl2015,Cohen2013,MiglioratiNobileTempone2015}. 
More recently, the use of alternative sampling distributions and non-constant weights was studied in \cite{narayan2014christoffel,cohen2016optimal,hampton2015coherence}. In particular, \cite{hampton2015coherence} presented a sampling distribution $\optimaldistribution_{\vsp}$ and a corresponding weight function for which the number of samples required to determine quasi-optimal approximations within $\vsp$ is bounded by $\dim \vsp$ up to a logarithmic factor. (This result was proved in \cite{hampton2015coherence} for total degree polynomial spaces and generalized in \cite{cohen2016optimal} to more general function spaces.) Since this distribution depends on $\vsp$, it is natural to ask how samples can be efficiently obtained from it and whether there is an alternative that works equally well for all polynomial subspaces $\vsp$. 
To address the first question, we present and analyze an efficient algorithm to generate samples from $\optimaldistribution_{\vsp}$ in the case where $\domPS$ is a product domain and $\measure$ is a product measure. For more general cases, we also study Markov chain methods for sample generation and analyze the effect of small perturbations of the sampling distribution on the convergence estimates of \cite{hampton2015coherence,cohen2016optimal}. To address the second question, we provide upper and lower bounds on $\optimaldistribution_{\vsp}$ in the case where $\domPS$ is a hypercube. The lower bound allows us to make the error estimates obtained in \cite{cohen2016optimal} more explicit. The upper bound shows that the arcsine distribution, which was proposed in \cite{narayan2014christoffel}, performs just as well as $\optimaldistribution_{\vsp}$ up to a constant that is independent of $\vsp$. This is advantageous for adaptive algorithms in which the polynomial subspace and the corresponding optimal sampling distribution vary during the iterations. However, the constant mentioned above does depend exponentially on the dimension $\dps$ of $\domPS$. 
\\

 To motivate the main contribution of this work, namely the multilevel weighted least squares polynomial approximation method, we note that the response surface $\rs$ from the beginning of this introduction cannot be evaluated exactly. Indeed, in most cases, the computation of $Q(\pde_{\psmi})$ requires the numerical solution of a PDE. Thus, we can only compute approximations of $\rs$ whose accuracy and computational work are determined by the PDE discretization. If we simply applied  polynomial least squares approximation using a sufficiently fine discretization of the PDE for all evaluations, then we would quickly face prohibitively long runtimes. For this reason, we introduce a multilevel method that combines lots of cheap samples using coarse discretizations with relatively few more expensive samples using fine discretizations of the PDE. In the recent decade, such multilevel algorithms have been studied intensely for the approximation of expectations \cite{harbrecht2013multilevel,heinrich2001multilevel,KuoScheichlSchwabEtAl2015,Haji-AliNobileTamelliniEtAl2015}. The goal of this paper is to extend this earlier work to the reconstruction of the full response surface, using global polynomial approximation and estimating the resulting error in the $L^2_{\measure}$ norm.
 
To describe the multilevel method, assume that we want to approximate a function $\rs$. Assume furthermore that we can only evaluate functions $\rs_{l}$ with $\rs_{l}{\rightarrow}\rs$ as $l\to\infty$ in a suitable sense and that the cost per evaluation increases as $l\to\infty$. A straightforward approach to this situation is to apply least squares approximation to some $\rs_{L}$ that is sufficiently close to $\rs$. The theory of (weighted) polynomial least squares approximation then provides conditions on the number of samples required to achieve quasi-optimal approximation of $\rs_{L}$ within a given space of polynomials $\vsp_{L}$. However, this approach can be computationally expensive, as each evaluation of $\rs_{L}$ requires the numerical solution of a PDE using a fine discretization. As an alternative, our proposed multilevel algorithm starts out with a least squares approximation of $\rs_{0}$ using a relatively large polynomial subspace $\vsp_0$ and correspondingly many samples. To correct for the committed error $\rs-\rs_{0}$, the algorithm then adds polynomial approximations of $\rs_{l}-\rs_{l-1}$ that lie in subspaces $\vsp_l$, $l\in\{1,\dots,\L\}$. 
Since we assume that $\rs_{l}\to\rs$ in an appropriate sense, the differences $\rs_{l}-\rs_{l-1}$ may be approximated using smaller polynomial subspaces for $l\to\infty$. Exploiting this fact, it is possible to obtain approximations with significantly reduced computational work. 
Indeed, we show that under certain conditions the work that the multilevel method requires to attain an accuracy of  $\epsilon>0$ is the same as the work that regular least squares polynomial approximation would require if $\rs$ could be evaluated exactly. It is clear that such a result is not always possible. For example, if $\rs$ were constant, then polynomial least squares approximations in any fixed polynomial subspace would yield the exact solution given a sufficiently large sample size. This means that the work required to achieve an accuracy $\epsilon>0$ would be bounded as $\epsilon\to 0$, which can clearly not be true for an algorithm that uses evaluations from approximate functions $\rs_l$ that become more expensive to evaluate as $l\to\infty$. Instead, the optimal computational work required for an accuracy of $\epsilon>0$ by such an algorithm would be determined by the convergence of $\rs_l\to \rs$ and by the work that is required for evaluations of $\rs_l$. 
Our results show that for many problems, the two cases described above are dichotomous. This means that the total computational work is determined either by the convergence and work associated with $\rs_{l}$ or by the convergence of polynomial least squares approximation using exact evaluations (see \Cref{thm:main} for a more formal statement). \\

The remainder of this paper is structured as follows. In \Cref{sec:dpls}, we review the theoretical analysis of weighted least squares approximation. In \Cref{sec:sampling}, we discuss different sampling strategies. We propose algorithms to sample the optimal distribution and we discuss the consequences of using perturbed distributions. In \Cref{sec:nonadaptive}, we introduce a novel multilevel algorithm and prove our main results concerning the work and convergence of this algorithm. For situations in which the regularity of $\rs$ and the convergence of $\rs_l$ are not known, we propose an adaptive algorithm in \Cref{sec:adaptive}. We discuss the applicability of our method to problems in uncertainty quantification in \Cref{sec:uq}. Finally, we present numerical experiments in \Cref{sec:numerics}.

\section{Weighted least squares polynomial approximation}

\label{sec:dpls}
In this section, we provide a short summary of the theory of weighted discrete least squares polynomial approximation, closely following \cite{cohen2016optimal}.

Assume that we want to approximate a function $\rs\in L^2_{\measure}(\domPS)$, where $\domPS\subset\R^\dps$ and $\measure$ is a probability measure on $\domPS$. 
The strategy of weighted discrete least squares polynomial approximation is to 
\begin{itemize}
	\item choose a finite-dimensional space $\vsp\subset L^2_{\measure}(\domPS)$ of polynomials on $\domPS$
	\item choose a function $\density\colon\domPS\to\R$ that satisfies $\int_{\domPS}\density(\psmi) \measure(d\psmi)=1$ and $\density>0$
	\item generate $\NS>0$ independent random samples from the \emph{sampling distribution} $\samplingmeasure$ defined by $\frac{\text{d}\samplingmeasure}{\text{d}\measure}:=\density$, 
	$$
	\psmi_j\sim \samplingmeasure, \quad j\in\{1,\dots,\NS\}.
	$$
	Here, $\frac{\text{d}\samplingmeasure}{\text{d}\measure}$ denotes the density, or Radon-Nikodym derivative, of the probability measure $\samplingmeasure$ with respect to the reference measure $\measure$.
	\item evaluate $\rs$ at $\psmi_j$, $j\in\{1,\dots,\NS\}$
	
	\item define the \emph{weight function} $\weight:=\frac{1}{\density}\colon \domPS\to\R$
	\item  and finally define the \emph{weighted discrete least squares approximation}
	\begin{equation}
	\label{eq:dpls:def}
	\P_{\vsp}\rs:=\argmin_{\p\in \vsp} \norm{\rs-\p}{\NS},
	\end{equation}
	where 
	\begin{equation}
	\label{eq:dpls:discretenorm}
	\norm{\f}{\NS}^2:=\langle \f,\f\rangle_{\NS}:=\frac{1}{\NS}\sum_{j=1}^{\NS} w(\psmi_j)|\f(\psmi_j)|^2\quad \forall \f\colon\domPS\to\R.
	\end{equation}
\end{itemize}
It is straightforward to show that the coefficients $\mathbf{\p}$ of $\P_{\vsp}\rs$ with respect to any basis $(\onb_j)_{j=1}^{\dvsp}$ of $\vsp$ are given by
\begin{equation}
\label{eq:dpls:computation}
	\mathbf{G}\mathbf{\p}=\mathbf{\coeff},
\end{equation}
with $\mathbf{G}_{ij}:=\langle \onb_i,\onb_j\rangle_{\NS}$, and $\coeff_j:=\langle \rs,\onb_j\rangle_{\NS}$, $i,j\in\{1,\dots,\dvsp\}$, assuming that $\mathbf{G}$ is invertible. If $\mathbf{G}$ is not invertible, then \Cref{eq:dpls:def} has multiple solutions and we define $\Pi_{\vsp}\rs$ as the one with the minimal $L^2_{\measure}(\domPS)$ norm. 
\begin{rem}
	\label{rem:matvec}
	Assembling the matrix $\mathbf{G}$ requires $\mathcal{O}(m^2N)$ operations. However, using the fact that $\mathbf{G}=\mathbf{M}^{\top}\mathbf{M}$ for $\mathbf{M}_{ij}:=N^{-1/2}\sqrt{\weight(\psmi_i)}\onb_j(\psmi_i)$, matrix vector products with $\mathbf{G}$ can be computed at the lower cost $\mathcal{O}(mN)$ as $\mathbf{G}\mathbf{x}=\mathbf{M}^{\top}(\mathbf{M}\mathbf{x})$. 
\end{rem}

Since $\weight\density=1$, the semi-norm defined in \Cref{eq:dpls:discretenorm} is a Monte Carlo approximation of the $L^2_{\measure}(\domPS)$ norm. Therefore, we may expect that the error $\norm{\rs-\P_{\vsp}\rs}{L^2_{\measure}(\domPS)}$ is close to the optimal one,
\begin{equation}\label{eq:best2}
e_{\vsp,2}(\rs):=\min_{\p\in\vsp}\norm{\rs-\p}{L^2_{\measure}(\domPS)}.
\end{equation}
Part (iii) of \Cref{thm:dpls} below shows that this is true in expectation, provided that the number of samples $\NS$ is coupled appropriately to the dimension $\dvsp=\dim\vsp$ of the approximating polynomial subspace and provided that we ignore outcomes where $\mathbf{G}$ is ill-conditioned.  For results in probability, we need to replace the best $L^2_{\measure}(\domPS)$ approximation by the best approximation in a weighted supremum norm,
\begin{equation}
\label{eq:bestlinf}
e_{\vsp,\weight,\infty}(\rs):=\inf_{\p\in\vsp}\sup_{\psmi\in\domPS}|\rs(\psmi)-\p(\psmi)|\sqrt{\weight(\psmi)}.
\end{equation}

\begin{thm}[\textbf{Convergence of weighted least squares, \cite[Theorem 2]{cohen2016optimal}}]
	\label{thm:dpls}
For arbitrary $r>0$, define 
$$\kappa:=\frac{1/2-1/2\log 2}{1+r}.$$ Assume that for all $\psmi\in\domPS$ there exists $\p\in\vsp$ such that $\p(\psmi)\not =0$ and denote by $(\onb_j)_{j=1}^{\dvsp}$ an $L^2_{\measure}$-orthonormal basis of $\vsp$. Finally, assume that 
\begin{equation}
\label{eq:K}
K_{\vsp,\weight}:=\norm{\weight \sum_{j=1}^{\dvsp}\onb_j^2}{L^\infty(\domPS)}\leq \kappa\frac{\NS}{\log \NS}.
\end{equation}
\begin{enumerate}[(i)]
	\item With probability larger than $1-2\NS^{-r}$, we have
	\begin{equation}
	\|\mathbf{G}-\mathbf{I}\|\leq\frac{1}{2},
	\end{equation}
	where $\mathbf{G}$ is the matrix from \Cref{eq:dpls:computation}, $\mathbf{I}$ is the identity matrix, and $\|{\cdot}\|$ denotes the spectral matrix norm.
	\item If $\|{\mathbf{G}-\mathbf{I}}\|\leq 1/2$, then for all $\rs$ with $\sup_{\psmi\in\domPS}|\rs(\psmi)|\sqrt{\weight(\psmi)}<\infty$, we have
	\begin{equation*}
		\norm{\rs-\P_{\vsp}\rs}{L^2_{\measure}(\domPS)}\leq (1+\sqrt{2})e_{\vsp,\weight,\infty}(\rs).
	\end{equation*}
\item If $\rs\in L^2_{\measure}(\domPS)$, then 
\begin{equation*}
\expect\norm{\rs-\P^{\cond}_{\vsp}\rs}{L^2_{\measure}(\domPS)}^2\leq (1+\frac{4\kappa}{\log \NS})e^2_{\vsp,2}(\rs)+2\norm{\rs}{L^2_{\measure}(\domPS)}^2\NS^{-r},
\end{equation*}
where $\expect$ denotes the expectation with respect to the $\NS$-fold draw from the sampling distribution $\samplingmeasure$ and 
\begin{equation*}
\P^{\cond}_{\vsp}\rs:=\begin{cases}
\P_{\vsp}\rs\quad\text{if }\|{\mathbf{G}-\mathbf{I}}\|\leq\frac{1}{2},\\
0\quad\text{otherwise}.
\end{cases}
\end{equation*}
\end{enumerate}
\end{thm}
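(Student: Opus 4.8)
The three parts rely on three different ingredients, so the plan is to treat them in turn.

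For part (i), I would rewrite the matrix $\mathbf{G}$ of \Cref{eq:dpls:computation} as an i.i.d. sum of bounded positive semidefinite rank-one matrices, $\mathbf{G}=\frac1\NS\sum_{j=1}^\NS\weight(\psmi_j)\,\mathbf{b}(\psmi_j)\mathbf{b}(\psmi_j)^{\top}$ with $\mathbf{b}(\psmi):=(\onb_1(\psmi),\dots,\onb_\dvsp(\psmi))^{\top}$. Since the $\onb_i$ are $L^2_\measure$-orthonormal and $\weight\density\equiv1$, each summand has expectation $\int_\domPS\mathbf{b}\mathbf{b}^{\top}\weight\density\,d\measure=\mathbf{I}$ and operator norm $\weight(\psmi_j)\sum_i\onb_i(\psmi_j)^2\le K_{\vsp,\weight}$ almost surely. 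I would then apply a matrix Chernoff inequality to this sum; matching the resulting Chernoff exponent at deviation $\tfrac12$ against \Cref{eq:K} (this is where the specific value $\kappa=(\tfrac12-\tfrac12\log2)/(1+r)$ is pinned down) turns the tail bound into $\prob(\|\mathbf{G}-\mathbf{I}\|>\tfrac12)\le2\dvsp\,\NS^{-(1+r)}$, and since $\int_\domPS\weight\sum_i\onb_i^2\,d\samplingmeasure=\dvsp$ forces $\dvsp\le K_{\vsp,\weight}\le\NS$, the dimension prefactor is absorbed to give $2\NS^{-r}$.

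Part (ii) is deterministic once $\|\mathbf{G}-\mathbf{I}\|\le\tfrac12$: then $\mathbf{G}$ has spectrum in $[\tfrac12,\tfrac32]$, so it is invertible with $\|\mathbf{G}^{-1}\|\le2$, and $\P_\vsp$ is the $\langle\cdot,\cdot\rangle_\NS$-orthogonal projection onto $\vsp$, hence linear and the identity on $\vsp$. For arbitrary $\p\in\vsp$ I would write $\P_\vsp\rs=\p+\P_\vsp(\rs-\p)$ and bound $\norm{\rs-\P_\vsp\rs}{L^2_\measure(\domPS)}\le\norm{\rs-\p}{L^2_\measure(\domPS)}+\norm{\P_\vsp(\rs-\p)}{L^2_\measure(\domPS)}$. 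The first term is at most $\sup_\psmi|\rs-\p|\sqrt\weight$ because $\norm{\rs-\p}{L^2_\measure(\domPS)}^2=\int_\domPS|\rs-\p|^2\weight\density\,d\measure\le\sup_\psmi(|\rs-\p|^2\weight)$. For the second term, writing $\mathbf{d}$ for the coefficient vector of $\P_\vsp(\rs-\p)$ in the orthonormal basis, I would chain $\norm{\P_\vsp(\rs-\p)}{L^2_\measure(\domPS)}^2=\|\mathbf{d}\|_2^2\le2\,\mathbf{d}^{\top}\mathbf{G}\mathbf{d}=2\norm{\P_\vsp(\rs-\p)}{\NS}^2\le2\norm{\rs-\p}{\NS}^2\le2\sup_\psmi(|\rs-\p|^2\weight)$, using in turn the lower spectral bound on $\mathbf{G}$, contractivity of the $\langle\cdot,\cdot\rangle_\NS$-orthogonal projection, and $\norm{g}{\NS}^2\le\sup_\psmi(|g|^2\weight)$. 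Combining gives $(1+\sqrt2)\sup_\psmi|\rs-\p|\sqrt\weight$; taking the infimum over $\p\in\vsp$ concludes, with the hypothesis $\sup_\psmi|\rs|\sqrt\weight<\infty$ only ruling out the trivial case $e_{\vsp,\weight,\infty}(\rs)=\infty$.

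For part (iii) I would split the expectation over the event $\Omega_+:=\{\|\mathbf{G}-\mathbf{I}\|\le\tfrac12\}$ and its complement. On $\Omega_+^{c}$ we have $\P^{\cond}_\vsp\rs=0$, so by part (i) that contribution is $\prob(\Omega_+^{c})\norm{\rs}{L^2_\measure(\domPS)}^2\le2\NS^{-r}\norm{\rs}{L^2_\measure(\domPS)}^2$, the second summand. On $\Omega_+$ I would now subtract the $L^2_\measure$-best approximation $\p^*\in\vsp$, for which $\norm{\rs-\p^*}{L^2_\measure(\domPS)}=e_{\vsp,2}(\rs)$ and $\rs-\p^*\perp\vsp$ in $L^2_\measure(\domPS)$; as in part (ii) $\P_\vsp\rs=\p^*+\P_\vsp(\rs-\p^*)$ with $\P_\vsp(\rs-\p^*)\in\vsp$, so Pythagoras in $L^2_\measure(\domPS)$ gives $\norm{\rs-\P_\vsp\rs}{L^2_\measure(\domPS)}^2=e_{\vsp,2}(\rs)^2+\|\mathbf{d}\|_2^2$, where $\mathbf{G}\mathbf{d}=\mathbf{c}$, $\coeff_i=\langle\rs-\p^*,\onb_i\rangle_\NS$ (i.e.\ \Cref{eq:dpls:computation} applied to $\rs-\p^*$). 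The key point is that each $\coeff_i$ is an empirical mean of i.i.d.\ centred variables — its mean vanishes because $\rs-\p^*\perp\onb_i$ and $\weight\density\equiv1$ — hence $\expect\coeff_i^2\le\tfrac1\NS\int_\domPS\weight(\rs-\p^*)^2\onb_i^2\,d\measure$; summing over $i$ and invoking the definition \Cref{eq:K} of $K_{\vsp,\weight}$ yields $\expect\|\mathbf{c}\|_2^2\le\tfrac{K_{\vsp,\weight}}{\NS}e_{\vsp,2}(\rs)^2$. Since $\|\mathbf{d}\|_2\le\|\mathbf{G}^{-1}\|\,\|\mathbf{c}\|_2\le2\|\mathbf{c}\|_2$ on $\Omega_+$, bounding the indicator of $\Omega_+$ by $1$ and using \Cref{eq:K} once more ($K_{\vsp,\weight}\le\kappa\NS/\log\NS$), the $\Omega_+$-contribution is at most $e_{\vsp,2}(\rs)^2+4\expect\|\mathbf{c}\|_2^2\le(1+\tfrac{4K_{\vsp,\weight}}{\NS})e_{\vsp,2}(\rs)^2\le(1+\tfrac{4\kappa}{\log\NS})e_{\vsp,2}(\rs)^2$. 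Adding the two contributions gives the claim.

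I expect the main obstacle to be the sharp constant $1+\tfrac{4\kappa}{\log\NS}$ in (iii): imitating (ii) naively, i.e.\ bounding $\|\mathbf{d}\|_2^2\le2\norm{\rs-\p}{\NS}^2$ for an arbitrary $\p\in\vsp$ and then taking expectations, only produces an absolute constant (such as $3$). Obtaining the stated bound forces two specific choices — subtracting the $L^2_\measure$-best approximation $\p^*$ so that the residual is genuinely orthogonal to $\vsp$ and the Pythagorean split is exact, and estimating $\sum_i\langle\rs-\p^*,\onb_i\rangle_\NS^2$ as a sum of variances of empirical averages — and it is precisely in the second step that the Christoffel-type quantity $K_{\vsp,\weight}$ enters carrying the factor $1/\NS$, so that \Cref{eq:K} supplies the $\tfrac{4\kappa}{\log\NS}$.
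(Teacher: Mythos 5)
Your proposal is correct, and it actually does more than the paper does: the paper's own proof of \Cref{thm:dpls} is essentially a citation of \cite[Theorem 2]{cohen2016optimal}, supplemented only by the observation that the bound in part (ii) depends on the sample realization solely through the event $\|\mathbf{G}-\mathbf{I}\|\le 1/2$. What you write out is, in structure, the proof of the cited result itself: a matrix Chernoff bound applied to $\mathbf{G}$ as an i.i.d.\ average of rank-one PSD matrices bounded by $K_{\vsp,\weight}$, with the dimension prefactor absorbed via $\dvsp\le K_{\vsp,\weight}\le \NS$, for (i); a purely deterministic quasi-optimality argument on the event for (ii); and, for (iii), the split over the event together with subtraction of the $L^2_\measure$-best approximation, so that the discrete coefficients are centered empirical averages whose summed second moments are bounded by $K_{\vsp,\weight}\,e^2_{\vsp,2}(\rs)/\NS$ --- exactly the mechanism that produces the factor $1+4\kappa/\log\NS$, as your closing remark correctly identifies. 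Your deterministic treatment of (ii), which uses only $\lambda_{\min}(\mathbf{G})\ge 1/2$, is precisely what substantiates the paper's added remark that the estimate holds simultaneously for all $\rs$ on that event. One fine point in (i): the event $\|\mathbf{G}-\mathbf{I}\|\le 1/2$ involves both tails, and in Tropp's matrix Chernoff bound the upper-tail exponent at deviation $1/2$ is $\tfrac{3}{2}\log\tfrac{3}{2}-\tfrac{1}{2}\approx 0.108$, which is smaller than the lower-tail exponent $\tfrac{1}{2}-\tfrac{1}{2}\log 2\approx 0.153$ appearing in the definition of $\kappa$; a literal execution of your ``matching'' step therefore recovers the stated failure probability only with this slightly smaller constant. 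That discrepancy is inherited from the constant quoted in the theorem and its source rather than being a structural flaw in your argument, but you should not claim the stated $\kappa$ is pinned down by the deviation-$1/2$ exponent without accounting for the upper tail.
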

\begin{proof}
	It is proved in \cite[Theorem 2]{cohen2016optimal} that the bound in part (ii) holds for a fixed $\rs$ with probability larger than $1-2\NS^{-r}$. A look at the proof reveals that the bound only depends on the event $\|{\mathbf{G}-\mathbf{I}}\|\leq 1/2$ and not on the specific choice of $\rs$. The remaining claims are exactly as in \cite{cohen2016optimal}.
\end{proof}


\section{Sampling strategies}
\label{sec:sampling}
It was observed in \cite{cohen2016optimal} that the constant $K_{\vsp,\weight}$ in \Cref{eq:K} satisfies
\begin{equation}
\label{eq:optimalK}
\aligned
m  = & \int \sum_{j=1}^{\dvsp} \onb_j(\psmi)^2 \measure(d\psmi)\\
\le & \left( \int  w^{-1}(\psmi) \measure(d\psmi) \right)  \left\| \sum_{j=1}^{\dvsp} w \onb_j^2 \right\|_{L^\infty(\domPS)} \\
 = & K_{\vsp,\weight}
\endaligned
\end{equation}
and that the inequality becomes an equality for the weight $\weight^{*}_{\vsp}={\optimaldensity_{\vsp}}^{-1}$ that is associated with the density
\begin{equation}
\label{eq:optimalmeasure}
\optimaldensity_{\vsp}(\psmi):=\frac{1}{\dvsp}\sum_{j=1}^{\dvsp}|\onb_j(\psmi)|^2.
\end{equation}
For this choice, \Cref{thm:dpls} roughly asserts that the number of samples required to determine a near-optimal approximation of $\rs$ in an  $\dvsp$-dimensional space $\vsp$ is smaller than $C\dvsp \log \dvsp$ for some $C>0$. In the remainder of this work, we refer to $\optimalweight_{\vsp}$, $\optimaldensity_{\vsp}$, and
\begin{equation}
\label{eq:optimaldistribution}
\optimaldistribution_{\vsp}\colon \quad \frac{\mathrm{d}\optimaldistribution_{\vsp}}{\text{d}\measure}:=\optimaldensity_{\vsp}
\end{equation}
as the \emph{optimal} weight, density, and distribution, respectively.
Since the optimal distribution $\optimaldistribution_{\vsp}$ depends on $\vsp$, practical implementations need to address the question how to obtain samples from $\optimaldistribution_{\vsp}$
 for general subspaces $\vsp$. Furthermore, since $\optimaldensity_{\vsp}$ depends on $\vsp$, the weight in $e_{\vsp,\infty}(\rs)$ in part (ii) of \Cref{thm:dpls} does as well. To address these issues, we present two types of results in this section.

First, we discuss how to obtain samples from $\optimaldistribution_{\vsp}$.
For the case where $\domPS$ is a $d$-dimensional hypercube and $\measure=\bigotimes_{j=1}^{d}\measure_{j}$ with $\measure_{j}$ satisfying certain assumptions, we propose a method for the generation of $\NS$ samples whose computational work is bounded in expectation by the product $Kd\NS$ with a constant $K$ that depends only on the measures $\measure_{j}$.
For non-product domains or measures, we briefly discuss how to use Markov chain Monte Carlo (MCMC) sampling for the generation of samples from approximate distributions and how perturbations of the sampling distributions affect the error estimates.

Second, we prove for the case $\domPS=[0,1]^d$ and under a rather permissive assumption on $\measure$ (in particular, $\measure$ must be absolutely continuous with respect to the Lebesgue measure) that for any polynomial subspace $\vsp$ the density of the optimal distribution $\optimaldistribution_{\vsp}$ with respect to the Lebesgue measure $\lambda$ satisfies
\begin{equation}
\label{eq:temparcsine}
C^{-d}<\frac{\text{d}\optimaldistribution_{\vsp}}{\text{d}\lambda}\leq C^d \arcsine_{d},
\end{equation}
where $0<C<\infty$ is independent of $\vsp$, and $\arcsine_{d}$ is the Lebesgue density of the $d$-dimensional arcsine distribution,
\begin{equation}
\label{eq:arcdef}
\arcsine_{d}(\psmi):=\prod_{j=1}^{d}\frac{1}{\pi\sqrt{\psmi_j(1-\psmi_j)}}.
\end{equation}

The lower bound in \Cref{eq:temparcsine} implies that the optimal weight $\optimalweight_{\vsp}$ is bounded above by $C^{d}\frac{d\measure}{d\lambda}$, which can be used to make the error estimate in part (ii) of \Cref{thm:dpls} more explicit.

By the upper bound, we may use samples from the $d$-dimensional arcsine distribution instead of the optimal distribution.
	Indeed, the upper bound implies that the weight function $\weight$ associated with the arcsine distribution satisfies $K_{\vsp,\weight}\leq C^dm$. Thus, the required number of samples is increased by at most a factor that is independent of $\vsp$. The advantages are that samples from the arcsine distribution can be generated efficiently, that we can use samples from the same distribution for all polynomial subspaces, and that the weight $\weight$ is easy to analyze and independent of $\vsp$.

\subsection{Sampling from the optimal distribution}\label{sec:optimal-sampling}
We now describe an efficient algorithm to obtain samples from $\optimaldistribution_{\vsp}$ in the case when $\domPS$ is a Cartesian product,  $\measure$ is a product measure, and $\vsp$ is downward closed.
\begin{definition}[\textbf{Downward closedness}]
	Let $\N:=\{0,1,\dots\}$.
	A set $\mis\subset\N^d$ is called \emph{downward closed} if $\mip \in\mathcal{I}$ implies $\mipp\in\mathcal{I}$ for any $\mipp\in\N^{\dps}$ with $\mipp\leq \mip$ componentwise.

	A space $\vsp$ of polynomials on a Cartesian product domain $\domPS=\prod_{j=1}^{d}I_j$
	with $I_j\subset\R$ is called \emph{downward closed} if
	it is the span of monomials,
	 $$
	\vsp=\operatorname{span}\{\psmi^{\mip}=\prod_{j=1}^{\dps}\ps_{j}^{\mip_j}:\mip\in\mathcal{I}\},
	$$
	for some downward closed set $\mis\subset \N^d$.
\end{definition}
\begin{rem}
	Observe that any non-trivial downward closed polynomial space $\vsp$ includes the constant functions and thus satisfies the assumption of \Cref{thm:dpls} that for all $\psmi\in\domPS$ there exists $\p\in\vsp$ with $\p(\psmi)\not =0$.
\end{rem}
We first discuss the case $\domPS=[0,1]^d$ and $\measure=\lambda$ the Lebesgue measure. For any downward closed subspace
\begin{equation*}
\vsp=\vspan\{\psmi^{\mip}:\mip\in\mis\}\subset L^2_{\lambda}([0,1]^d)
\end{equation*} with $\mis\subset\N^{d}$ and $|\mis|=\dim \vsp=\dvsp$, 
an orthonormal basis is then given by
\begin{equation*}
(\leg_{\mip})_{\mip\in\mis}
\end{equation*}
where
\begin{equation*}
\leg_{\mip}(\psmi):=\prod_{j=1}^{d}\leg_{\mip_j}(\psmi_j)
\end{equation*}
and $(\leg_n)_{n\in \N}$ are the Legendre polynomials on $[0,1]$, which are orthonormal with respect to the one-dimensional Lebesgue measure. By orthonormality, each $\leg_{\mip}^2$ may be interpreted as a probability density with respect to the Lebesgue measure. Thus,
\begin{equation*}
\frac{\text{d}\optimaldistribution_{\vsp}}{\text{d}\lambda}=\optimaldensity_{\vsp}=\frac{1}{\dvsp}\sum_{\mip\in\mis}\leg_{\mip}^2
\end{equation*} may be interpreted as mixture of $\dvsp$ probability densities. An efficient strategy to obtain samples from $\optimaldistribution_{\vsp}$ is therefore to first choose $\mip\in\mis$ at random and then generate a sample from the distribution with Lebesgue density $\leg^2_{\mip}$. Since $\leg^2_{\mip}=\prod_{j=1}^{d}L^2_{\mip_j}$, samples from this distribution can be generated componentwise. Finally, to obtain samples from the univariate distributions with Lebesgue densities $\leg^2_{n}$, $n\in\N$, we use a rejection sampling method with the arcsine proposal density $\arcsine_{1}$. By \cite[Theorem 1]{nevai1994generalized} the Legendre polynomials satisfy
 \begin{equation}
 \label{eq:rejectionbound}
|\leg_{n}(\psmi)|^2\leq 4e\arcsine_{1}(\psmi)\quad\forall \psmi\in[0,1]\;\forall n\in\N.
 \end{equation}

 Therefore, the theory of rejection sampling \cite[Chapter 4.5]{MR2151519} ensures that if we repeatedly generate $\psmi\sim \arcsine_{1}$ and $U\sim\text{Unif}(0,1)$ until $U\leq |\leg_{n}(\psmi)|^2/(4e\arcsine_{1}(\psmi))$ holds, then the resulting sample is exactly distributed according to $\leg_n^2$ and the required number of iterations until acceptance has a geometric distribution with mean $4e$.
The total expected computational work for the generation of $\NS$ samples from $\optimaldistribution_{\vsp}$ is thus $4e\NS d$, if we assume that the computation of $\leg^2_n(\psmi)$ is $O(1)$. In practice, a 3-term recurrence formula whose work is bounded by $3n$ can be used to compute $\leg_{n}(\psmi)$. This increases  the upper bound for the expected  work to $12e\NS \frac{1}{\dvsp}\sum_{\mip\in\mis}|\mip|_{1}$.

\Cref{eq:rejectionbound} holds more generally for probability measures on $[0,1]$ with Lebesgue densities of the form $\frac{\text{d}\measure}{\text{d}\lambda}=C(\alpha,\beta)\psmi^{\alpha}(1-\psmi)^{\beta}$, $\alpha,\beta\geq -1/2$  \cite[Theorem 1]{nevai1994generalized}. The bound on the associated orthogonal polynomials $(\leg^{\alpha,\beta}_{n})_{n\in\N}$, which are commonly called Jacobi polynomials, is
\begin{equation*}
|\leg^{\alpha,\beta}_n(\psmi)|^2\frac{\text{d}\measure}{\text{d}\lambda}\leq 2e(2+\sqrt{\alpha^2+\beta^2})\arcsine_{1}(\psmi)\quad\forall\psmi\in[0,1]\;\forall n\in\N.
\end{equation*}

Even more generally, the same inequality holds with a constant $C_{\measure}$ independent of $\psmi$ and $n$ for orthogonal polynomials with respect to a wide class of measures $\mu$ that are absolutely continuous with respect to the Lebesgue measure on $[0,1]$ \cite[Theorem 12.1.4]{MR0372517}.
When $C_{\measure}$ is unknown, however, rejection sampling cannot be applied. As a substitute, we could use MCMC sampling (which we also discuss below as an alternative method to sample directly from $\optimaldistribution$ in cases when no product structure of $\domPS$ or $\measure$ can be exploited). The error resulting from the fact that the resulting samples would not be distributed exactly according to $|\leg_n|^2$ can be controlled using \Cref{pro:stability} below.

For orthonormal polynomials $(\her_n)_{n\in\N}$ with respect to rapidly decaying measures supported on the whole real line, such as Gaussian measures, it is shown in  \cite{levin1992christoffel} that $|\her_n(\psmi)|^2\frac{\text{d}\measure}{\text{d}\lambda}$ is exponentially concentrated in an interval $[-a_n,a_n]$ with $C^{-1}n^{b}\leq a_n\leq Cn^b$ for some $b>0$ and $C>0$ depending on $\measure$, and  that for some $C_{\measure}$
\begin{equation*}
|\her_n(\psmi)|^2\frac{\text{d}\measure}{\text{d}\lambda}\leq C_{\measure}\frac{a_n}{4}|1-\frac{\psmi}{a_n}|^{-1/2}\quad\forall \psmi\in[-a_n,a_n]\;\forall n\in\N.
\end{equation*}

 Together with the stability result in \Cref{pro:stability} below, this shows that the previous results can be transfered to measures on the real line, if we simply ignore the mass outside $[-a_n,a_n]$ and apply rejection sampling or Markov chain methods with the proposal density $\frac{a_n}{4}|1-\frac{\psmi}{a_n}|^{-1/2}$.
 Alternatively, a different result in \cite{levin1992christoffel} shows that on $[-a_n,a_n]$ the density $|\her_n(\psmi)|^2\frac{\text{d}\measure}{\text{d}\lambda}$ is bounded by the uniform probability density up to a factor that grows sublinearly in the polynomial degree $n$.
 \\

For the remainder of this subsection, we assume that a polynomial space $\vsp$ is fixed and use the simplified notation  $\optimaldistribution=\optimaldistribution_{\vsp}$, $\optimaldensity=\optimaldensity_{\vsp}$,  $\optimalweight=\optimalweight_{\vsp}$.

We assume that we cannot use exact samples from the optimal distribution because, for example, $\domPS$ is not a hypercube or $\measure$ not a product measure. We therefore discuss the use of Markov chain methods for the generation of samples. Since the resulting samples are not exactly distributed according to the optimal distribution, we need the following stability result.

\begin{pro}[\textbf{Stability with respect to perturbations of the sampling density}]
	\label{pro:stability}
	All results in \Cref{thm:dpls} that are valid for the optimal choice $\optimaldistribution$ with $\frac{\mathrm{d}\optimaldistribution}{\mathrm{d}\measure}=\optimaldensity$ of the sampling distribution hold true if we instead use samples from $\tilde{\samplingmeasure}$ with $\frac{\mathrm{d}\tilde{\samplingmeasure}}{\mathrm{d}\measure}:=\tilde{\density}$ (but keep the weight function $\optimalweight=1/\optimaldensity$), provided that
	\begin{equation*}
	\norm{1-\tilde{\density}/\optimaldensity}{L^p_{\optimaldistribution}(\domPS)}\leq \frac{1}{6}m^{-1-1/p} \quad\text{for some }p\geq 1,
	\end{equation*}
	where $\dvsp=\dim \vsp$,
	and provided that we replace $\kappa$ by $(5/36-5/6\log (5/6))/(1+r)$. We note that the total variation distance $\norm{\cdot}{\text{TV}}$ satisfies
	\begin{equation*}
	\norm{\tilde{\samplingmeasure}-\optimaldistribution}{\text{TV}}:=\frac{1}{2}\int_{\domPS}|\tilde{\density}(\psmi)-\optimaldensity(\psmi)|\measure(\mathrm{d}\psmi)=\frac{1}{2}\norm{1-\tilde{\density}/\optimaldensity}{L^1_{\optimaldistribution}(\domPS)}.
	\end{equation*}
\end{pro}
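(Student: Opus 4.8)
The plan is to reduce everything to the proof of \Cref{thm:dpls} in \cite{cohen2016optimal}, by isolating the two properties of the unperturbed set-up on which that proof rests and checking that the perturbation affects them only in a controlled way. Those properties are: (a) the Gram matrix $\mathbf{G}$ is the average of the i.i.d.\ positive semidefinite matrices $\mathbf{Y}_j:=\optimalweight(\psmi_j)\,\mathbf{b}(\psmi_j)\mathbf{b}(\psmi_j)^{\top}$, where $\mathbf{b}(\psmi):=(\onb_1(\psmi),\dots,\onb_\dvsp(\psmi))^{\top}$, which satisfy $\|\mathbf{Y}_j\|=\optimalweight(\psmi_j)\sum_k\onb_k(\psmi_j)^2\le K_{\vsp,\optimalweight}$ almost surely and, for the optimal sampling law, $\expect\mathbf{Y}_j=\mathbf{I}$; and (b) the discrete semi-inner product $\langle\cdot,\cdot\rangle_\NS$ is an unbiased estimator of $\langle\cdot,\cdot\rangle_{L^2_\measure(\domPS)}$, which is what produces the sharp constant in part (iii). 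For the optimal choice, $K_{\vsp,\optimalweight}=\norm{\optimalweight\sum_k\onb_k^2}{L^\infty(\domPS)}=\dvsp$, since $\sum_k\onb_k^2=\dvsp\,\optimaldensity$. If we draw the samples from $\tilde{\samplingmeasure}$ but keep the weight $\optimalweight=1/\optimaldensity$, the almost-sure bound $\|\mathbf{Y}_j\|=\dvsp$ survives unchanged---it involves only the weight and the basis, not the sampling law---while $\expect\mathbf{Y}_j$ becomes $\mathbf{A}:=\expect\tilde{\mathbf{G}}$, with entries $\mathbf{A}_{ij}=\int_{\domPS}\onb_i\onb_j\,(\tilde{\density}/\optimaldensity)\,\measure(\mathrm{d}\psmi)$, which is no longer the identity.

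The quantitative core is the estimate $\|\mathbf{A}-\mathbf{I}\|\le\frac{1}{6\dvsp}\le\frac16$. For a unit vector $\mathbf{x}$ with associated polynomial $\p=\sum_k x_k\onb_k\in\vsp$ one has
\[
\mathbf{x}^{\top}(\mathbf{A}-\mathbf{I})\mathbf{x}=\int_{\domPS}\p^2\Big(\frac{\tilde{\density}}{\optimaldensity}-1\Big)\,\measure(\mathrm{d}\psmi)=\int_{\domPS}\frac{\p^2}{\optimaldensity}\Big(\frac{\tilde{\density}}{\optimaldensity}-1\Big)\,\optimaldistribution(\mathrm{d}\psmi),
\]
so Hölder's inequality with conjugate exponents $p/(p-1)$ and $p$ gives $|\mathbf{x}^{\top}(\mathbf{A}-\mathbf{I})\mathbf{x}|\le\norm{\p^2/\optimaldensity}{L^{p/(p-1)}_{\optimaldistribution}(\domPS)}\,\norm{1-\tilde{\density}/\optimaldensity}{L^p_{\optimaldistribution}(\domPS)}$. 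The pointwise Christoffel inequality $\optimalweight\,\p^2\le\dvsp\norm{\p}{L^2_\measure(\domPS)}^2$ (Cauchy--Schwarz together with $\sum_k\onb_k^2=\dvsp\,\optimaldensity$) then yields $\norm{\p^2/\optimaldensity}{L^{p/(p-1)}_{\optimaldistribution}(\domPS)}\le\dvsp^{1/p}\norm{\p}{L^2_\measure(\domPS)}^2=\dvsp^{1/p}$, and the hypothesis gives $|\mathbf{x}^{\top}(\mathbf{A}-\mathbf{I})\mathbf{x}|\le\dvsp^{1/p}\cdot\tfrac16\dvsp^{-1-1/p}=\tfrac{1}{6\dvsp}$.

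With this in hand, parts (i) and (ii) are quick. For part (i) I would re-run the matrix Chernoff argument underlying \Cref{thm:dpls} with the mean $\mathbf{A}$ in place of $\mathbf{I}$; the deviation $\|\mathbf{A}-\mathbf{I}\|\le\frac16$ is precisely what is accounted for by replacing $\kappa$ with $(5/36-\tfrac56\log(5/6))/(1+r)$. Part (ii) requires no change at all: rereading its proof, it uses only the event $\|\mathbf{G}-\mathbf{I}\|\le\frac12$, the pointwise bound $\optimalweight|\rs-\p|^2\le e_{\vsp,\optimalweight,\infty}(\rs)^2$, and the normalisation $\int_{\domPS}\optimaldensity\,\measure(\mathrm{d}\psmi)=1$, none of which involves the sampling distribution, so $\norm{\rs-\P_\vsp\rs}{L^2_\measure(\domPS)}\le(1+\sqrt2)\,e_{\vsp,\optimalweight,\infty}(\rs)$ holds verbatim on the good event.

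Part (iii) is where I expect the real work, and it is the main obstacle, because property (b) fails under perturbation. Splitting $\rs=\p_\rs+h$ with $\p_\rs\in\vsp$ a best $L^2_\measure$-approximation and using $\P^{\cond}_\vsp\rs=\p_\rs+\P_\vsp h$ on the good event gives $\norm{\rs-\P^{\cond}_\vsp\rs}{L^2_\measure(\domPS)}^2=e_{\vsp,2}(\rs)^2+\norm{\P_\vsp h}{L^2_\measure(\domPS)}^2$ there (and $\le2\norm{\rs}{L^2_\measure(\domPS)}^2\NS^{-r}$ off it), with $\norm{\P_\vsp h}{L^2_\measure(\domPS)}^2\le4|\mathbf{d}|^2$ and $\mathbf{d}_k=\langle h,\onb_k\rangle_\NS$. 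The new feature is that $\expect\mathbf{d}_k=\langle h,\onb_k(\tilde{\density}/\optimaldensity-1)\rangle_{L^2_\measure(\domPS)}$ no longer vanishes; I would handle this by keeping the factor $\onb_k$ and using $\langle h,\onb_k\rangle_{L^2_\measure(\domPS)}=0$, so that Cauchy--Schwarz over $k$ together with $\sum_k\onb_k^2=\dvsp\,\optimaldensity$ give $\sum_k(\expect\mathbf{d}_k)^2\le\dvsp\,\norm{h}{L^2_\measure(\domPS)}^2\,\norm{1-\tilde{\density}/\optimaldensity}{L^2_{\optimaldistribution}(\domPS)}^2$, while the second-moment term reduces, via $\optimalweight\sum_k\onb_k^2=\dvsp$, to a multiple of $\tfrac{\dvsp}{\NS}\int_{\domPS}h^2\,(\tilde{\density}/\optimaldensity)\,\measure(\mathrm{d}\psmi)$; bounding both by the hypothesis (passing between $L^q_{\optimaldistribution}$-norms as needed, and letting the modified $\kappa$ absorb the residual $O(\dvsp^{-1})$ factors) recovers the factor $(1+\tfrac{4\kappa}{\log\NS})$. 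Finally, the total-variation identity at the end of the statement is immediate from $\int_{\domPS}|\tilde{\density}-\optimaldensity|\,\measure(\mathrm{d}\psmi)=\int_{\domPS}|1-\tilde{\density}/\optimaldensity|\,\optimaldistribution(\mathrm{d}\psmi)$.
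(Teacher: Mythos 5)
Your proposal is correct and, at the top level, coincides with the paper's strategy: keep the weight $\optimalweight$, note that the almost-sure bound $\|\mathbf{X}_i\|=\optimalweight(\psmi_i)\sum_k\onb_k^2(\psmi_i)=\dvsp$ is unaffected by the change of sampling law, show that the perturbed mean satisfies $\|\expect\mathbf{X}_i-\mathbf{I}\|\leq 1/6$, and invoke the matrix Chernoff inequality of Tropp with the modified $\kappa$, after which part (ii) is purely conditional on the event $\|\mathbf{G}-\mathbf{I}\|\leq 1/2$. Where you genuinely differ is in the key perturbation estimate: the paper bounds the spectral norm by $\dvsp$ times the entrywise maximum, applies H\"older to each entry, and shows $\norm{\optimalweight\onb_j\onb_k}{L^q_{\optimaldistribution}}\leq \dvsp^{1-1/q}$ by treating $q=1$ (Cauchy--Schwarz), $q=\infty$ (via $ab\leq\frac12 a^2+\frac12 b^2$) and interpolating; you instead bound the quadratic form $\mathbf{x}^{\top}(\expect\mathbf{X}_i-\mathbf{I})\mathbf{x}$ directly, using the Christoffel-type bound $\optimalweight\p^2\leq\dvsp\norm{\p}{L^2_\measure(\domPS)}^2$ together with $L^1$--$L^\infty$ interpolation of $\p^2/\optimaldensity$ in $L^{p/(p-1)}_{\optimaldistribution}$. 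This is a valid alternative and in fact sharper: since the matrix is symmetric, you get $\|\expect\mathbf{X}_i-\mathbf{I}\|\leq 1/(6\dvsp)$, whereas the paper's max-norm step loses a factor $\dvsp$ and lands exactly at $1/6$; your route would even allow the hypothesis to be weakened to $\frac16\dvsp^{-1/p}$. Your extended treatment of part (iii) goes beyond what the paper does (its proof stops at the probability bound and declares that "everything else follows"); the loose ends you flag there are real---with only an $L^p_{\optimaldistribution}$ hypothesis for some $p\geq1$, your bias estimate requires the $L^2_{\optimaldistribution}$ norm of $1-\tilde{\density}/\optimaldensity$, and the second-moment term $\frac{\dvsp}{\NS}\int_{\domPS} h^2(\tilde{\density}/\optimaldensity)\,\measure(\mathrm{d}\psmi)$ is not controlled by $\norm{h}{L^2_\measure(\domPS)}^2$ without stronger integrability---but these subtleties are not addressed in the paper's own one-line disposal of part (iii) either, so they do not place your argument below the standard of the published proof.
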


\begin{proof}
	The proof of \Cref{thm:dpls} in \cite{cohen2016optimal} is based on large deviation bounds for the matrix $\mathbf{G}$ of \Cref{eq:dpls:computation}. In particular, it is based on the observation that $\mathbf{G}$ is a Monte Carlo average,
	\begin{equation*}
	\mathbf{G}=\frac{1}{\NS}\sum_{i=1}^{\NS}\mathbf{X}_i,
	\end{equation*}
	of independent and identically distributed matrices
	\begin{equation*}
	\mathbf{X}_i:=\left(\optimalweight(\psmi_i)\onb_j(\psmi_i)\onb_k(\psmi_i)\right)_{j,k\in\{1,\dots,\dvsp\}} \quad \text{with }\psmi_i\sim \optimaldistribution
	\end{equation*}
	that satisfy
	\begin{equation*}
	\expect \mathbf{X}_i=\mathbf{I}
	\end{equation*}
	by $L^2_{\measure}(\domPS)$-orthonormality of the basis polynomials $\onb_j$, $j\in\{1,\dots,\dvsp\}$.
	A Chernoff inequality for matrices then provides the bound on $\prob(\|{\mathbf{G}-\mathbf{I}}\|\leq 1/2)$ in part (i) of \Cref{thm:dpls} from which everything else follows. The crucial insight is that this inequality permits small perturbations of the expected value. Indeed, if we replace $\optimaldistribution$ by $\tilde{\samplingmeasure}$ in the definition of $\mathbf{X}_i$, then  \cite[Theorem 1.1]{MR2946459} yields the same bound on $\prob(\|{\mathbf{G}-\mathbf{I}}\|\leq 1/2)$, with the new value of $\kappa$, provided that $\|{\expect\mathbf{X}_i-\mathbf{I}}\|\leq 1/6$. To show that this last estimate holds, we use
	\begin{equation*}
	\begin{split}\|{\expect\mathbf{X}_i-\mathbf{I}}\|&\leq \dvsp\norm{\expect\mathbf{X}_i-\mathbf{I}}{\text{max}}\\
	&=\dvsp\max_{j,k\in \{1,\dots,\dvsp\}}|\int_{\domPS}\optimalweight(\psmi)\onb_j(\psmi)\onb_k(\psmi)(\tilde{\density}(\psmi)-\optimaldensity(\psmi))\measure(d\psmi)|\\
	&\leq \dvsp\max_{j,k\in \{1,\dots,\dvsp\}}\norm{\optimalweight \onb_j\onb_k}{L^q_{\optimaldensity\measure}}\norm{1-\tilde{\density}/\optimaldensity}{L^{p}_{\optimaldensity\measure}},
	\end{split}
	\end{equation*}
	where we used Hölder's inequality with $1/q=1-1/p$ in the last step.

	The claim now follows if we can prove that
	\begin{equation*}
	\norm{\optimalweight \onb_j\onb_k}{L^q_{\optimaldensity\measure}}\leq m^{1-1/q}\quad\forall q\in[1,\infty].
	\end{equation*}
	For this, we first consider the case when $q=1$, $p=\infty$, for which
	\begin{equation*}
	\begin{split}
	\norm{\optimalweight \onb_j\onb_k}{L^q_{\optimaldensity\measure}}&=\int_{\domPS}|\onb_j(\psmi)\onb_k(\psmi)|\measure(d\psmi)\\
	&\leq \norm{\onb_j}{L^2_{\measure}}\norm{\onb_k}{L^2_{\measure}}\\
	&\leq 1
	\end{split}
	\end{equation*}
	by the Cauchy-Schwarz inequality and $L^2_{\measure}(\domPS)$-orthonormality of the functions $\onb_j$.

	Next, we consider the case when $q=\infty$, $p=1$, for which
	\begin{equation*}
	\begin{split}
	\norm{\optimalweight \onb_j\onb_k}{L^q_{\optimaldensity\measure}}&=\sup_{\psmi\in \domPS}|\optimalweight(\psmi)\onb_j(\psmi)\onb_k(\psmi)|\\
	&\leq \frac{m}{2}\frac{\onb_j^2(\psmi)+\onb_k^2(\psmi)}{\sum_{i=1}^{\dvsp}\onb_i^2(\psmi)}\\
	&\leq m,
	\end{split}
	\end{equation*}
	where we used the elementary inequality $ab\leq \frac{1}{2}a^2+\frac{1}{2}b^2$ for the second step.

	Finally the claim for $1<q<\infty$ follows from Littlewood's interpolation inequality for $L^q$ norms.
\end{proof}

So far, we have assumed that $\domPS$ and $\measure$ exhibit product structure, which allowed us to generate samples coordinate-wise, exploiting known bounds on univariate orthogonal polynomials. 
For more general cases, we now briefly discuss Metropolized independent sampling, which is a simple MCMC algorithm, for the generation of samples from the optimal distribution $\optimaldistribution$. For an extensive treatment of the theory of MCMC algorithms we refer to \cite{liu2008monte}.

The general strategy of MCMC algorithms for the generation of samples from $\optimaldistribution$ is to construct a Markov chain for which $\optimaldistribution$ is an invariant distribution. Ergodic theory then shows that under some assumptions the location of this Markov chain after $n\gg1$ steps is approximately distributed according to $\optimaldistribution$.
Metropolis-Hastings algorithms are MCMC algorithms that construct Markov chains based on user-specified \emph{proposal densities} $p(\psmi,\cdot)$, $\psmi\in\domPS$ (with respect to $\measure$) and a rejection step to ensure convergence to the desired limit distribution $\optimaldistribution$. More specifically, the transition kernel of a Metropolis-Hastings algorithm has the form
\begin{equation}
\label{eq:MC}
K(\psmi,\mathrm{d}\psmi'):=\begin{cases}
p(\psmi,\psmi')\min\{1,\frac{\optimaldensity(\psmi')p(\psmi',\psmi)}{\optimaldensity(\psmi)p(\psmi,\psmi')}\}\measure(\mathrm{d}\psmi')\quad&\text{if }\psmi'\not = \psmi\\
1-\int_{z\not = \psmi}p(\psmi,z)\min\{1,\frac{\optimaldensity(z)p(z,\psmi)}{\optimaldensity(\psmi)p(\psmi,z)}\}\measure(\mathrm{d}z)\quad &\text{if }\psmi'=\psmi.
\end{cases}
\end{equation}
This kernel can be interpreted (and implemented) as proposing a transition from the current state $\psmi$ to a new state $\psmi'$ drawn from the density $p(\psmi,\cdot)$, and rejecting this transition with a certain probability determined by the values of $\optimaldensity$ and $p$ at the current state $\psmi$ and the proposed state $\psmi'$. The rejection probability is designed to ensure the detailed balance condition $\optimaldistribution(\mathrm{d}\psmi)K(\psmi,\mathrm{d}\psmi')=\optimaldistribution(\mathrm{d}\psmi')K(\psmi',\mathrm{d}\psmi)$, which in turn guarantees that $\optimaldistribution$ is invariant under $K$.

Metropolized independent sampling is the name of the subset of Metropolis-Hastings algorithms for which the proposal density $p$ is independent of the current state $\psmi$. If we denote the corresponding state-independent proposal density by $p(\psmi')$ 
and define $\gap:=\inf_{\psmi\in\domPS}p(\psmi)/\optimaldensity(\psmi)$, then it can be shown \cite[Section 3.2.2]{liu1996metropolized} that starting from any distribution $\genericmeasure$ we have the bound
\begin{equation}
\label{eq:TV}
\norm{K^{n}\genericmeasure-\optimaldistribution}{TV}\leq 2 (1-g)^n
\end{equation} for the total variation distance between the $n$th step probability distribution $K^n\genericmeasure$ of the Markov chain and the target distribution $\optimaldistribution$.
This means that if the proposal  density satisfies $\gap:=\inf_{\psmi\in\domPS}p(\psmi)/\optimaldensity(\psmi)>0$, then $n:= \gap^{-1}\log(24\dvsp^2)$ Markov chain steps suffice to ensure that
\begin{equation*}
\norm{K^n\genericmeasure-\optimaldistribution}{TV}\leq 2 (1-\gap)^{\gap^{-1}\log(24m^2)}\leq \frac{1}{12m^2},
\end{equation*}
as required by \Cref{pro:stability}. To generate $\NS>0$ independent samples from $K^{n}\genericmeasure$, we have to run $\NS$ independent copies of the Markov chain, which differs from the more common practice to use $\NS$ successive, thus dependent, steps of a single Markov chain.

\subsection{Sampling from the arcsine distribution}
\label{ssec:arcsine}
We now determine lower and upper bounds for the optimal sampling distributions associated with downward closed polynomial spaces on $[0,1]^d$ in terms of the arcsine distribution. Namely, we show in \Cref{pro:sampling} below, under quite general assumptions on the measure $\measure$, the bound
\begin{equation*}
	C^{-d}\leq \frac{\mathup{d}\optimaldistribution_{\vsp}}{\mathup{d}\lambda}(\psmi)\leq C^d\arcsine_{d}(\psmi),
\end{equation*}
where $\arcsine_d$ is the arcsine density from \Cref{eq:arcdef}.

The lower bound can be used to make the bound in \Cref{thm:dpls} more precise. Indeed, it implies that the weight $\optimalweight_{\vsp}$ appearing in $e_{\vsp,\optimalweight_{\vsp},\infty}$ satisfies
\begin{equation}
\label{eq:weightbound}
\optimalweight_{\vsp}(\psmi)\leq C^{d}\frac{\text{d}\measure}{\text{d}\lambda}(\psmi).
\end{equation}

 The upper bound provides an alternative sampling strategy:  Instead of sampling from the optimal distribution, we may simply sample from the arcsine distribution with Lebesgue density $\arcsine_{d}$ without using Acceptance/Rejection or Markov chain methods. Indeed, since using the arcsine distribution for sample generation corresponds to the choice $\density=\arcsine_{d}\frac{\text{d}\lambda}{\text{d}\measure}$ for the density in \Cref{sec:dpls}, the associated weight function is given by
 \begin{equation}
 \label{eq:arcsineweight}
 \weight:=(\arcsine_{d})^{-1}\frac{\text{d}\measure}{\text{d}\lambda}.
 \end{equation}
 Hence, the upper bound  shows that the constant $K_{\vsp,\weight}$ from \Cref{thm:dpls} satisfies
 \begin{equation}
 \label{eq:arcsineK}
 \begin{split} K_{\vsp,\weight}=&\norm{\weight(\psmi)\sum_{j=1}^{\dvsp}\onb_j(\psmi)^2}{L^\infty(\domPS)}\\
 =&\norm{\weight(\psmi)m\frac{\text{d}\optimaldistribution}{\text{d}\measure}(\psmi)}{L^\infty(\domPS)}\\
 \leq &C^d\dvsp,
 \end{split}
 \end{equation}
 which is larger than the optimal value, $\dvsp$, only by the factor $C^d$, which is independent of $\vsp$.
 The advantages are that exact and independent samples from the univariate arcsine distribution can be generated  efficiently as $(\sin(X)+1)/2$ for a uniform random variable $X$ on $[-\pi/2,\pi/2]$, that we can use samples from the same distribution for all polynomial subspaces, and that the weight in \Cref{eq:arcsineweight}, which enters the error estimate in \Cref{thm:dpls} through $e_{\vsp,\weight,\infty}$, is known explicitly, is independent of $\vsp$, and vanishes at the boundary if $\frac{\text{d}\measure}{\text{d}\lambda}$ is bounded.

\begin{definition}[\textbf{Doubling measures}]
	A measure $\measure$ on $[0,1]$ is called doubling measure if it is absolutely continuous with respect to the Lebesgue measure and if there exists $L>0$ such that
	\begin{equation*}
	\measure(2I)\leq L\measure(I)
	\end{equation*}
	for any interval $I\subset [0,1]$. Here, $2I$ is defined as the interval with the same midpoint as $I$ and twice the length, intersected with $[0,1]$. We call $L>0$ the doubling constant of $\measure$.
\end{definition}
\begin{pro}[\textbf{Bounds on the optimal distribution}]
	\label{pro:sampling}
	If $[0,1]^d$, $d\geq 1$ is equipped with the product $\measure=\bigotimes_{j=1}^{d}\measure_j$  of doubling measures $\measure_j$ on $[0,1]$, then the optimal sampling distribution $
	\optimaldistribution_{\vsp}$  associated with a finite-dimensional downward closed space $\vsp$ of polynomials on $[0,1]^d$ satisfies
	\begin{equation}
	\label{eq:samplingbounds}
	C^{-d}\leq \frac{\mathup{d}\optimaldistribution_{\vsp}}{\mathup{d}\lambda}(\psmi)\leq C^d\arcsine_{d}(\psmi)
	\end{equation}
	for some constant $C>0$ depending only on the maximal doubling constant of the measures $\measure_j$, $j\in\{1,\dots,d\}$.
\end{pro}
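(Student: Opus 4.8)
\noindent\emph{Proof plan.} The plan is to reduce both inequalities to one-dimensional estimates for orthogonal polynomials with respect to doubling measures. The key structural observation is that on the product domain $[0,1]^d$ with the product measure $\measure=\bigotimes_{j}\measure_j$, a downward closed space $\vsp=\operatorname{span}\{\psmi^{\mip}:\mip\in\mis\}$ carries the tensor-product orthonormal basis $\onb_{\mip}(\psmi)=\prod_{j=1}^d\onb^{(j)}_{\mip_j}(\psmi_j)$, $\mip\in\mis$, where $(\onb^{(j)}_n)_{n\in\N}$ are the orthonormal polynomials for $\measure_j$. Writing $w_j:=\tfrac{\mathup{d}\measure_j}{\mathup{d}\lambda}$ for the Lebesgue density of $\measure_j$ and $m=|\mis|$, this gives
\begin{equation*}
\frac{\mathup{d}\optimaldistribution_{\vsp}}{\mathup{d}\lambda}(\psmi)=\optimaldensity_{\vsp}(\psmi)\,\frac{\mathup{d}\measure}{\mathup{d}\lambda}(\psmi)=\frac{1}{m}\sum_{\mip\in\mis}\prod_{j=1}^d\bigl|\onb^{(j)}_{\mip_j}(\psmi_j)\bigr|^2 w_j(\psmi_j),
\end{equation*}
so everything comes down to controlling the univariate factors $|\onb^{(j)}_n(\psmi_j)|^2 w_j(\psmi_j)$ from above and below and then accounting for the combinatorics of $\mis$.

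For the upper bound I would simply invoke the pointwise bounds on univariate orthogonal polynomials with respect to doubling measures recalled in \Cref{sec:optimal-sampling}: for each $j$ there is $C_j>0$, depending only on the doubling constant of $\measure_j$, with $|\onb^{(j)}_n(\psmi_j)|^2 w_j(\psmi_j)\le C_j\arcsine_1(\psmi_j)$ for all $n\in\N$ and $\psmi_j\in[0,1]$. Bounding each of the $m$ summands above by $\prod_{j=1}^d C_j\arcsine_1(\psmi_j)=\bigl(\prod_j C_j\bigr)\arcsine_d(\psmi)$, the prefactor $1/m$ cancels and we obtain $\tfrac{\mathup{d}\optimaldistribution_{\vsp}}{\mathup{d}\lambda}(\psmi)\le(\max_j C_j)^d\arcsine_d(\psmi)$, which is the claimed upper bound.

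The lower bound is the substantial direction. The univariate input I would establish is a matching \emph{lower} Christoffel estimate: for a doubling measure $\measure_j$ there is $c_j\in(0,1]$, depending only on its doubling constant, such that $\sum_{k=0}^{n}|\onb^{(j)}_k(\psmi_j)|^2 w_j(\psmi_j)\ge c_j(n+1)$ for all $n$ and all $\psmi_j$ — equivalently $\lambda_n(\measure_j,\psmi_j)\le c_j^{-1}w_j(\psmi_j)/(n+1)$ for the Christoffel function. I would prove this by exhibiting a degree-$n$ polynomial that is bounded above and below by absolute constants at $\psmi_j$ and is concentrated in the natural Christoffel window of half-width $\asymp\tfrac{\sqrt{\psmi_j(1-\psmi_j)}}{n}+\tfrac{1}{n^2}$ around $\psmi_j$ (for instance the translate of a Fejér kernel under the substitution $\psmi_j=(1+\cos\theta)/2$), and bounding its $L^2_{\measure_j}$ mass using the doubling property of $\measure_j$ to sum the dyadic tail outside the window. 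Granting this, the $d$-dimensional lower bound follows by induction on $d$: peel off the last coordinate by writing $\mis=\{\mip:\mip_d<h(\mip')\}$, where $\mip'=(\mip_1,\dots,\mip_{d-1})$ ranges over the projection of $\mis$ (again downward closed) and $h$ is the coordinatewise non-increasing ``staircase height''. The inner sum over $\mip_d$ is then at least $c_d\,h(\mip')$ by the univariate estimate, and the layer-cake decomposition $h(\mip')=\sum_{\ell\ge1}\mathbf{1}[\mip'\in\mis^{(\ell)}]$ into the \emph{downward closed} super-level sets $\mis^{(\ell)}:=\{\mip':h(\mip')\ge\ell\}$ reduces the remaining sum to the induction hypothesis applied to each $\mis^{(\ell)}$; summing over $\ell$ and using $\sum_{\ell}|\mis^{(\ell)}|=\sum_{\mip'}h(\mip')=m$ gives $\tfrac{\mathup{d}\optimaldistribution_{\vsp}}{\mathup{d}\lambda}(\psmi)\ge\prod_{j=1}^d c_j\ge(\min_j c_j)^d$. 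Taking $C:=\max\{\max_j C_j,(\min_j c_j)^{-1}\}$, which depends only on the maximal doubling constant, finishes the proof.

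The hard part will be the univariate lower Christoffel estimate with a constant that depends only on the doubling constant, uniformly in the degree $n$ and in $\psmi_j\in[0,1]$; the needle-polynomial construction has to be arranged carefully enough that only the doubling constant enters the dyadic-tail bound. A second point to be aware of is that the tempting shortcut of lower bounding $\optimaldensity_{\vsp}$ by the contribution of a single rectangular sub-index-set of $\mis$ does \emph{not} work: a downward closed set of cardinality $m$ need not contain a box $[0,\mip^*]$ with $\prod_j(\mip^*_j+1)$ comparable to $m$ (e.g.\ $\{\mip:(\mip_1+1)(\mip_2+1)\le n^2\}$ for $d=2$), which is precisely why the inductive layer-cake argument, distributing the univariate gain across all coordinates while staying inside the downward closed world, appears to be unavoidable.
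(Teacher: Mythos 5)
Your lower-bound argument is essentially correct and is, in transposed form, the paper's own proof: the paper slices along the first coordinate, uses downward closedness to write the optimal density as a mixture in which the first-coordinate factor is a convex combination of \emph{univariate optimal densities} (a layer-cake over the slice sizes $|\mis_j|$), and closes an induction on $d$ using the two-sided univariate estimate for doubling weights, which it cites from \cite[Eq.~7.14]{mastroianni2000weighted} rather than reproving. Your peeling of the last coordinate together with the layer cake over the super-level sets of the height function is the same mechanism; the only real difference is that you propose to prove the univariate Christoffel-type estimate by hand (needle polynomials plus dyadic summation via doubling), which is exactly the point the paper outsources to the literature, and which you correctly identify as the hard part.

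The genuine gap is in your upper bound. You bound each summand $\prod_{j=1}^{d}|\onb^{(j)}_{\mip_j}(\psmi_j)|^2\,\frac{\mathup{d}\measure_j}{\mathup{d}\lambda}(\psmi_j)$ separately, which requires a uniform pointwise bound $|\onb^{(j)}_n(\psmi_j)|^2\,\frac{\mathup{d}\measure_j}{\mathup{d}\lambda}(\psmi_j)\leq C_j\,\arcsine_{1}(\psmi_j)$ for every \emph{individual} orthonormal polynomial of an arbitrary doubling measure, uniformly in $n$. No such bound is recalled in \Cref{sec:optimal-sampling} at that generality: the estimates there are for Legendre and Jacobi weights \cite{nevai1994generalized} and for the restricted class of \cite[Thm.~12.1.4]{MR0372517}, all far stronger hypotheses than doubling. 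For doubling weights the available (and true) estimates concern the Christoffel function, i.e.\ the averages $\frac{1}{k}\sum_{n<k}|\onb^{(j)}_n|^2\,\frac{\mathup{d}\measure_j}{\mathup{d}\lambda}$, not single terms; uniform boundedness of individual orthonormal polynomials can fail even for weights bounded above and below (Steklov-type examples), which are doubling, so a term-by-term bound cannot be pushed through. A tell-tale sign is that your upper-bound argument never uses downward closedness: applied to a singleton index set it would assert precisely the individual-polynomial bound. The repair is to run for the upper bound the same slicing/layer-cake induction you already use for the lower bound, so that in each coordinate only averages over initial segments $\{0,\dots,k-1\}$ appear; these are exactly the univariate optimal densities, for which the doubling-weight estimate $C^{-1}\leq \frac{1}{k}\sum_{n<k}|\onb^{(j)}_n|^2\,\frac{\mathup{d}\measure_j}{\mathup{d}\lambda}\leq C\,\arcsine_{1}$ gives both directions at once --- which is precisely how the paper argues.
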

\begin{proof}
	 \Cref{eq:samplingbounds} was shown to hold for the univariate optimal sampling distributions $\optimaldistribution_{k}(\psmi)$ associated with univariate spaces of polynomials of degree less than or equal to $k\in\N$ in \cite[Equation 7.14]{mastroianni2000weighted}.
	We prove the case $d>1$ by induction.

	To reduce the complexity of notation, we assume without loss of generality that $\measure_1=\measure_j$ for all $j\in\{1,\dots,d\}$. This, together with the assumption that $\vsp$ is downward closed, implies that
	\begin{equation*}
	\vsp=\vspan_{\mip\in \mis\subset \N^d}\{\onb_{\mip}(\psmi):=\onb_{\mip_1}(\psmi_1)\cdots \onb_{\mip_{d}}(\psmi_d)\}
	\end{equation*} for $(\onb_{j})_{j\in\N}$ the univariate orthonormal polynomials associated with $\measure_{1}$ and a multi-index set $\mis\subset\N^d$. For $j\in\N$, we define the multi-index sets $\mis_{j}:=\{\mip\in\mis: \mip_{1}=j\}$ and the spaces
	\begin{equation*}
	\tilde{\vsp}_{j}:=\vspan_{\mip\in \mis_{j}\subset \N^d}\{\tilde{\onb}_{\mip}(\tilde{\psmi}):=\onb_{\mip_2}(\psmi_2)\cdots \onb_{\mip_{d}}(\psmi_{d})\}
	\end{equation*} of polynomials on $[0,1]^{d-1}$ with associated optimal distributions $\tilde{\samplingmeasure}_{j}$ on $[0,1]^{d-1}$. This allows us to write
	\begin{equation*}
	\begin{split}
	\frac{\text{d}\optimaldistribution_{\vsp}}{\text{d}\lambda}(\psmi)&=\optimaldensity_{\vsp}(\psmi)\frac{\text{d}\measure}{\text{d}\lambda}(\psmi)\\
	&=\frac{1}{|\mis|}\sum_{\mip\in\mis}\onb_{\mip}^2(\psmi)\frac{\text{d}\measure}{\text{d}\lambda}(\psmi)\\
	&=\frac{1}{|\mis|}\sum_{j\in\N}\onb_{j}^2(\psmi_1)\frac{\text{d}\measure_{1}}{\text{d}\lambda}(\psmi_1)\sum_{\mip\in\mis_{j}}\tilde{\onb}_{\mip}^2(\tilde{\psmi})\prod_{j=2}^{d}\frac{\mathup{d}\measure_{1}}{\mathup{d}\lambda}(\psmi_j)\\
	&=\frac{1}{|\mis|}\sum_{j\in\N}\onb_{j}^2(\psmi_1)\frac{\text{d}\measure_{1}}{\text{d}\lambda}(\psmi_{1})|\mis_{j}|\frac{\text{d}\tilde{\samplingmeasure}_{j}}{\text{d}\lambda}(\tilde{\psmi}),
	\end{split}
	\end{equation*}
	which by the induction hypothesis for the case $d-1$ entails
	\begin{equation}
	\label{eq:temp}
	C^{-(d-1)}A(\psmi_1)\leq \frac{\text{d}\optimaldistribution_{\vsp}}{\text{d}\lambda}(\psmi)\leq A(\psmi_1)C^{d-1}\arcsine_{d-1}(\tilde{\psmi})
	\end{equation}
	with
	\begin{equation*}
	A(\psmi_1):=\frac{1}{|\mis|}\sum_{j\in\N}\onb_j^2(\psmi_1)\frac{\text{d}\measure_{1}}{\text{d}\lambda}(\psmi_{1})|\mis_{j}|.
	\end{equation*}
	We now use the fact that $A(\psmi_1)$ can be written as a weighted average of the univariate densities $\frac{\mathup{d}\optimaldistribution_{k}}{\mathup{d}\lambda}(\psmi_1)=\frac{1}{k}\sum_{j=1}^{k}\onb_j^2(\psmi_1)\frac{\text{d}\measure_{1}}{\text{d}\lambda}(\psmi_{1})$:
	\begin{equation*}
	A(\psmi_1)=\frac{1}{\sum_{k=1}^{\infty}{w_k}}\sum_{k=1}^{\infty}w_k\frac{\text{d}\optimaldistribution_{w_k}}{\text{d}\lambda}(\psmi_1)
	\end{equation*}
	with $w_k:=|\{j:|\mis_{j}|\geq k\}|$.
	Together with the case $d=1$ this shows
	\begin{equation*}
	C^{-1}\leq A(\psmi_1)\leq C \arcsine_{1}(\psmi_1),
	\end{equation*}
	which, when inserted into \Cref{eq:temp}, yields
	\begin{equation*}
	C^{-d}=C^{-(d-1)}C^{-1}\leq \frac{\text{d}\optimaldistribution_{\vsp}}{\text{d}\lambda}(\psmi)\leq C^{d-1}C\arcsine_{1}(\psmi_1)\arcsine_{d-1}(\tilde{\psmi})= C^{d}\arcsine_{d}(\psmi).
	\end{equation*}
\end{proof}

\section{Multilevel weighted least squares approximation}
\label{sec:nonadaptive}
In this section, we define a multilevel weighted polynomial least squares method and establish convergence rates for the approximation of a function $\rs_{\infty}\colon\domPS\subset\R^d\to\R$, $d\in\N\cup\{\infty\}$ in a normed vector space $(\F,\|\cdot\|_{\F})\hookrightarrow (L^2_{\measure}(\domPS),\norm{\cdot}{L^2_{\measure}(\domPS)})$ of continuous functions on $\domPS$, under the following assumptions.
\begin{itemize}
	\item{\textbf{A1:}} (Convergence of approximations) There exist functions $\rs_\n\in \F$, $\n\geq 1$ such that 
	\begin{equation*}
	\norm{\rs_{\infty}-\rs_\n}{\F}\leq C_0\n^{-\sc}
	\end{equation*} 
 
	\begin{equation*}
	\norm{\rs_{\infty}-\rs_{\n}}{L^2_{\measure}(\domPS)}\leq C_0 \n^{-\wc}
	\end{equation*}
	for some $C_0>0$, $\sc>0$ and $\wc\geq \sc$.
	
\item{\textbf{A2(p):}} (Polynomial approximability) There exist  downward closed spaces of polynomials $\vsp_{\dvsp}$, $\dvsp\geq 1$ on $\domPS$ such that
\begin{equation*}
\dim \vsp_{\dvsp}\leq C_1 \dvsp^{\dimp},
\end{equation*}
	\begin{equation*}
	e_{\dvsp,p}(\F):=\sup_{\rs\in \F} \frac{e_{\vsp_{\dvsp},p}(\rs)}{\norm{\rs}{\F}}\leq C_1\dvsp^{-\alpha}
	\end{equation*}
	for some $\dimp>0, \alpha>0$, $C_1>0$ and $p=2$ or $p=\infty$. (In the latter case, we use the shorthand $e_{\vsp_{\dvsp},\infty}(\rs):=e_{\vsp_{\dvsp},\optimalweight_{\dvsp},\infty}(\rs)$, where $\optimalweight_{\dvsp}$ is the optimal weight associated with $\vsp_{\dvsp}$.)

\item{\textbf{A3:}} (Sample work) The work required for a single evaluation of $\rs_\n$ satisfies $\work(\rs_n)\leq C_2\n^{\gamma}$ for some $\gamma>0$, $C_2>0$. 

\end{itemize}
\begin{rem}
		In Assumption A2(p), we have introduced the exponent $\dimp$, which in contrast to previous sections may be different from $1$, to be able to apply our results with common sequences of polynomial subspaces without the need for reparametrization. 
\end{rem}

\begin{exa}[\textbf{Polynomial approximability}]
	\label{exa:polapp}
	\begin{itemize}\item 
For univariate Sobolev spaces $\F=H^{\alpha}(\domPS)$, $\domPS=(0,1)$ with $\alpha>0$, Theorem 1 in \cite{quarteroni1984some} shows that
\begin{equation*}
e_{\dvsp,2}(H^{\alpha}(\domPS))\leq C \dvsp^{-\alpha}
\end{equation*}
for the space $V_{m}$ of univariate polynomials with degree less than $m$ and for $\measure$ the Lebesgue measure.
Analogous results also hold in higher dimensions. Here, optimal sequences of polynomial approximation spaces depend on the available smoothness. In particular, optimal polynomial approximation spaces for functions in Sobolev spaces $H^\alpha(\domPS)$ with $\domPS\subset\R^d$ and $\alpha>0$ are of total degree type, whereas functions in Sobolev spaces $H^{\alpha}_{\mix}(\domPS)$ of dominating mixed smoothness can be optimally approximated by hyperbolic cross polynomial spaces \cite{DuTeUl2015}.

	Similar results for the best approximation in the supremum norm hold for functions in Hölder spaces $\F=C^{s,t}(\domPS)$, $s\in\N$, $t\in[0,1]$ \cite[Theorem 2]{BagbyBosLevenberg2002} (and their dominating mixed smoothness analogues). 

\item 
Alternatively, we may simply define the space $\F$ via polynomial approximability of its elements. Assume that we have a sequence $(\vsp_\dvsp)_{\dvsp=1}^\infty$ of downward closed polynomial spaces on $\domPS\subset\R^\dps$ with $\dps\in\N\cup\{\infty\}$. If for some $\alpha>0$ we define
		\begin{equation*}
		\F:=\left\{ \rs\colon \domPS\to\R: \|\rs\|_{\F}:=\sup_{\dvsp\in\N} e_{\vsp_{\dvsp},p}(\rs)\dvsp^{\alpha}<\infty\right\}
		\end{equation*}
		with the auxiliary definition $\vsp_0:=\{0\}$, then it is easy to show that $\|\cdot\|_{\F}$ is a norm of $\F$ and that Assumption 2(p) holds with the given $\alpha$ and $C_1=1$.  The choice of the sequence of subspaces $\vsp_m$ can be based on truncating a orthogonal decomposition of $L^2_\measure(\Gamma)$ such as to include only basis functions whose contribution is above a given threshold in $\vsp_m$.  For more information on this construction, see \Cref{sec:adaptive} and \cite{Haji-AliNobileTamelliniEtAl2015a,devore1998nonlinear}.
\end{itemize}
\end{exa}

We now define the multilevel least squares method for a fixed number of levels $L\in\N$. We introduce subsequences
\begin{equation}
\label{eq:subseq}
\dvsp_k:=M\exp(k/({\dimp+\alpha})), \quad k\in\{0,\dots,L\}
\end{equation} and 
\begin{equation*}
\n_l:=\exp(l/(\gamma+\sc)), \quad l\in\{0,\dots,L\}
\end{equation*} with $M:=\exp(L\delta)$, $\delta:=\frac{\wc-\sc}{\alpha(\gamma+\sc)}\geq 0$ if $\gamma/\sc>\dimp/\alpha$ and $M:=1$ else. For our analysis we assume that $\dvsp$ and $\n$ can take non-integer values; in practice, rounding up to the nearest integer increases the required work only by a constant factor. By abuse of notation, we keep the simple notation $\vsp_k$, $e_{k,p}$,  and $\rs_{l}$ for the quantities $\vsp_{m_k}$, $e_{\dvsp_k,p}$, and $\rs_{n_l}$, respectively. 

Next, we draw independent, identically distributed, random samples
\begin{equation*}
\domPS_k=\{\psmi_{k,1},\dots,\psmi_{k,|\domPS_k|}\}\subset \domPS, \quad k\in \{0,\dots,L\}
\end{equation*}
with $\psmi_{k,j}\sim \optimaldistribution_{k}$, where $\optimaldistribution_{k}:=\optimaldistribution_{\vsp_k}$ is the optimal sampling distribution of $\vsp_{k}$ from \Cref{eq:optimaldistribution}. To ensure accuracy of our approximations, we couple the numbers of samples to the dimensions of the polynomial spaces via
\begin{equation}
\label{eq:size}
m_k^{\dimp}\leq \kappa \frac{|\domPS_k|}{\log |\domPS_k|}\leq 2 m_k^{\dimp}\;\;\forall k\in\{0,\dots,L\},\;\text{where } \kappa:=\frac{1-\log 2}{2+2L}.
\end{equation}
 By \Cref{eq:optimalK}, this guarantees that the assumption of \Cref{thm:dpls} is satisfied with $r=L$. Alternatively, we may replace $\kappa$ by $C^{-d}\kappa$ with $C$ from \Cref{pro:sampling} if $\domPS$ and $\measure\ll \lambda$ are products and if we use the arcsine distribution to generate samples, or we may choose $\kappa$ as in \Cref{pro:stability} if we use samples that are only approximately distributed according to the optimal distribution. 

Finally, we denote by $\P_{k}\colon F\to \vsp_k$ the random weighted least squares approximation using evaluations in $\domPS_k$, $k\in\{0,\dots,L\}$ and define the multilevel method
\begin{equation}
\label{eq:mldef}
\begin{split}
\smol_{L}(\rs_{\infty})&:=\P_{L}\rs_{0}+\sum_{l=1}^{L}\P_{L-l}(\rs_{l}-\rs_{l-1})\\&=\sum_{l=0}^{L}\P_{L-l}(\rs_{l}-\rs_{l-1})
\end{split}
\end{equation}
where we used the auxiliary definition $\rs_{-1}:=0$. 

For the sake of readability, we do not keep track of constants  and denote by $\lesssim$ any inequality that holds up to a factor depending only on $C_0,C_1,C_2,\alpha,\sc,\wc,\gamma$.
The exact values of these constants may be determined from the proof of the next Theorem.
\begin{thm}[\textbf{Convergence in probability}]
	\label{thm:main}	
Denote by 
\begin{equation}
\label{eq:workdef}
\work(\smol_{L}(\rs_{\infty})):=|\Gamma_L|\work(f_0)+\sum_{l=1}^{L}|\Gamma_{L-l}|\left(\work(f_l)+\work(f_{l-1})\right)
\end{equation}
the work that $\smol_{L}(\rs_{\infty})$ requires for evaluations of the functions $\rs_{l}$, $l\in\{0,\dots,L\}$.
Define
\begin{align*}
\rate&:=\begin{cases}
\dimp/\alpha& \text{if } \gamma/\sc\leq \dimp/\alpha\\
\theta \gamma/\sc+(1-\theta)\dimp/\alpha \text{ with }\theta:=\sc/\wc& \text{if }\gamma/\sc>\dimp/\alpha
\end{cases}\\
\text{and}\\
\lrate&:=\begin{cases}
2& \text{if } \gamma/\sc<\dimp/\alpha\\
3+\dimp/\alpha&\text{if }\gamma/\sc=\dimp/\alpha\\
1& \text{if }\gamma/\sc>\dimp/\alpha \text{ and }\wc=\sc\\
2&\text{if }\gamma/\sc>\dimp/\alpha \text{ and }\wc>\sc
\end{cases}.
\end{align*}
	
Let $0<\epsilon\lesssim 1$. If Assumptions A1, A2($\infty$), and A3 hold, then we may choose $L\in\N$ such that 
	\begin{equation*}
	\begin{split}
	\work(\smol_{L}(\rs_{\infty}))\lesssim \epsilon^{-\rate}|\log \epsilon|^{\lrate}\log|\log \epsilon|,
	\end{split}
	\end{equation*}

	and such that in an event $E$ with $\prob(E^{c})\lesssim \epsilon^{\log|\log \epsilon|}$ the multilevel approximation satisfies
	\begin{equation}
	\norm{\rs_{\infty}-\smol_{L}(\rs_{\infty})}{L^2_{\measure}(\domPS)}\leq \epsilon.
	\end{equation}
	
\end{thm}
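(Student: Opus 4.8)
The plan is to bound the error $\norm{\rs_\infty - \smol_L(\rs_\infty)}{L^2_\measure}$ by a telescoping argument and then optimize the choice of $L$. First I would split the error as
\[
\norm{\rs_\infty - \smol_L(\rs_\infty)}{L^2_\measure} \le \norm{\rs_\infty - \rs_L}{L^2_\measure} + \norm{\rs_L - \smol_L(\rs_\infty)}{L^2_\measure},
\]
where the first term is controlled by Assumption A1 as $C_0 n_L^{-\wc}$. For the second term, using $\rs_L = \sum_{l=0}^L (\rs_l - \rs_{l-1})$ and the definition of $\smol_L$ in \Cref{eq:mldef}, I would write $\rs_L - \smol_L(\rs_\infty) = \sum_{l=0}^L (I - \P_{L-l})(\rs_l - \rs_{l-1})$ and bound each summand. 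On the event where all the Gram matrices $\mathbf{G}_k$ are well-conditioned (intersection over $k\in\{0,\dots,L\}$ of the events from part (i) of \Cref{thm:dpls}, each failing with probability $\le 2|\domPS_k|^{-L}$ since we set $r=L$ in \Cref{eq:size}), part (ii) of \Cref{thm:dpls} gives $\norm{(I-\P_{L-l})(\rs_l - \rs_{l-1})}{L^2_\measure} \le (1+\sqrt2)\, e_{L-l,\infty}(\rs_l - \rs_{l-1}) \le (1+\sqrt 2) C_1 m_{L-l}^{-\alpha} \norm{\rs_l - \rs_{l-1}}{\F}$ by Assumption A2($\infty$), and then $\norm{\rs_l - \rs_{l-1}}{\F} \le \norm{\rs_\infty - \rs_l}{\F} + \norm{\rs_\infty - \rs_{l-1}}{\F} \lesssim n_{l-1}^{-\sc}$ by A1 (with the convention $\rs_{-1}=0$, the $l=0$ term uses $\norm{\rs_0}{\F}\lesssim 1$).

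Next I would substitute the explicit choices $m_k = M\exp(k/(\dimp+\alpha))$ and $n_l = \exp(l/(\gamma+\sc))$ from \Cref{eq:subseq}. This turns the error bound into a geometric-type sum $\sum_{l=0}^L M^{-\alpha} \exp(-\alpha(L-l)/(\dimp+\alpha)) \exp(-\sc (l-1)/(\gamma+\sc))$ plus the term $n_L^{-\wc}$. The exponent comparison $\gamma/\sc$ versus $\dimp/\alpha$ determines whether this sum is dominated by its first term ($l=0$), its last term ($l=L$), or is roughly $L$ times a common value (the borderline case $\gamma/\sc = \dimp/\alpha$, which contributes the extra $\log|\log\epsilon|$ and the $\dimp/\alpha$ in the exponent of $|\log\epsilon|$). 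Carrying this out, one gets $\norm{\rs_L - \smol_L(\rs_\infty)}{L^2_\measure} \lesssim \exp(-cL)$ for an appropriate rate $c$, and matching it against $n_L^{-\wc} = \exp(-\wc L/(\gamma+\sc))$ shows that the choice of $M = \exp(L\delta)$ with $\delta = (\wc-\sc)/(\alpha(\gamma+\sc))$ in the regime $\gamma/\sc > \dimp/\alpha$ is exactly what balances the two contributions. In all cases the total error is $\lesssim \exp(-cL)$, so choosing $L \asymp |\log\epsilon|$ (more precisely $L = \lceil c^{-1}\log(C/\epsilon)\rceil$) yields the required accuracy $\le\epsilon$, and since each $|\domPS_k|^{-L} \lesssim \epsilon^{\log|\log\epsilon|}$ the union bound over $L+1$ levels gives $\prob(E^c)\lesssim \epsilon^{\log|\log\epsilon|}$.

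Finally I would compute the work. Plugging \Cref{eq:size} (so $|\domPS_k| \asymp m_k^{\dimp}\log m_k$) and A3 ($\work(\rs_l)\lesssim n_l^\gamma$) into \Cref{eq:workdef} gives $\work(\smol_L(\rs_\infty)) \lesssim \sum_{l=0}^L m_{L-l}^{\dimp}\log(m_{L-l})\, n_l^\gamma$, which is again a geometric-type sum governed by $\gamma/\sc$ versus $\dimp/\alpha$; its dominant term is $\asymp M^{\dimp}\exp(\dimp L/(\dimp+\alpha))$ or $\asymp \exp(\gamma L/(\gamma+\sc))$ (times a factor $L$ in the borderline case, coming from the $\log m_k$ and the sum length). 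Expressing $L \asymp |\log\epsilon|$ converts $\exp(\text{const}\cdot L)$ into a power $\epsilon^{-\rate}$ with precisely the $\rate$ defined in the statement — in the regime $\gamma/\sc>\dimp/\alpha$ the exponent $\rate = \theta\gamma/\sc + (1-\theta)\dimp/\alpha$ with $\theta=\sc/\wc$ arises from the interpolation between the two geometric endpoints forced by the $M=\exp(L\delta)$ rescaling — while the logarithmic factors $|\log\epsilon|^{\lrate}\log|\log\epsilon|$ collect the $\log|\domPS_k|$ terms and, in the critical case, the extra factor of $L$. The main obstacle is the bookkeeping in this case analysis: one must carefully track which endpoint of each geometric sum dominates in each of the four regimes, verify that the somewhat delicate choice of $M$ and $\delta$ genuinely balances error against work in the $\gamma/\sc>\dimp/\alpha$ case, and keep the exponents of the logarithmic factors correct — especially isolating the borderline case $\gamma/\sc=\dimp/\alpha$ where sums degenerate and produce the extra $L$-factors.
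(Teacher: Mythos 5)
Your proposal is correct and follows essentially the same route as the paper's proof: the same telescoping decomposition, the use of parts (i) and (ii) of \Cref{thm:dpls} with $r=L$ and a union bound over levels, Assumptions A1, A2($\infty$), A3 to reduce everything to geometric sums governed by $\gamma/\sc$ versus $\dimp/\alpha$, the role of $M=\exp(L\delta)$ in the regime $\gamma/\sc>\dimp/\alpha$, and the choice $L\asymp|\log\epsilon|$. The only remaining work is the bookkeeping you already flag, e.g.\ noting that $\kappa\sim 1/(L+1)$ in \Cref{eq:size} contributes one of the powers of $|\log\epsilon|$ in the work bound, exactly as carried out in the paper.
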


\begin{proof}
	The strategy of this proof is to establish bounds on $\work(\smol_{L}(\rs_{\infty}))$ and $\norm{\rs_{\infty}-\smol_{L}(\rs_{\infty})}{L^2_{\measure}(\domPS)}$ for arbitrary $L\in\N$ first, and then to show that, for the right choice of $L$, the latter is smaller than $\epsilon$ and the former is bounded by $\epsilon^{-\rate}|\log\epsilon|^{\lrate}\log|\log \epsilon|$.
	
		\textbf{Work bounds.}  We may deduce immediately from \Cref{eq:size} the rough upper bound
		\begin{equation*}
		\sqrt{|\domPS_k|}\leq\frac{|\domPS_k|}{\log|\domPS_k|}\leq \frac{2}{\kappa}M^{\dimp}\exp(k\frac{\dimp}{\dimp+\alpha})\lesssim (L+1)M^{\dimp}\exp(k\frac{\dimp}{\dimp+\alpha})
		\end{equation*}
		on the number of samples at level $k\in\{0,\dots,L\}$. 
		Using \Cref{eq:size} again and inserting the previous estimate, we obtain the finer estimate
		\begin{equation*}
		\begin{split}
		|\domPS_k|&\leq (L+1)M^{\dimp}\exp(k\frac{\dimp}{\dimp+\alpha})\log {|\domPS_k|}\\
		&\lesssim (L+1)M^{\dimp}(\log (L+1)+\log M^{\dimp})\exp(k\frac{\dimp}{\dimp+\alpha})(k+1).
		\end{split}
		\end{equation*}
	Since 
	$$
	\work(\rs_{l})+\work(\rs_{l-1})\lesssim \exp(l\frac{\gamma}{\gamma+\sc})
	$$ by Assumption {A3}, we may conclude that
	
	\begin{equation}
	\label{eq:workbounds}
	\begin{split}
	\work(\smol_{L}(\rs_{\infty}))
	&\lesssim  (L+1)M^{\dimp}(\log (L+1)+\log M^{\dimp})\sum_{l=0}^{L}\exp((L-l)\frac{\dimp}{\dimp+\alpha})(L-l+1)\exp(l\frac{\gamma}{\gamma+\sc})\\
	&=(L+1)M^{\dimp}(\log (L+1)+\log M^{\dimp})\exp(L\frac{\dimp}{\dimp+\alpha})\sum_{l=0}^{L}\exp\Big(-l\big(\frac{\dimp}{\dimp+\alpha}-\frac{\gamma}{\gamma+\sc}\big)\Big)(L-l+1).
	\end{split}
	\end{equation}
	
	We now distinguish three cases.
	\begin{enumerate}[(a)]
		\item $\gamma/\sc<\dimp/\alpha$: In this case $\dimp/(\dimp+\alpha)>\gamma/(\gamma+\sc)$. Thus, the sum on the right-hand side of \Cref{eq:workbounds} satisfies
		\begin{equation*}
		\begin{split}
		\sum_{l=0}^{L}\exp\Big(-l\big(\frac{\dimp}{\dimp+\alpha}-\frac{\gamma}{\gamma+\sc}\big)\Big)(L-l+1)&\lesssim (L+1)\sum_{l=0}^{L}\exp\Big(-l\big(\frac{\dimp}{\dimp+\alpha}-\frac{\gamma}{\gamma+\sc}\big)\Big)\\
		&\lesssim L+1.
		\end{split}
		\end{equation*}
		Together with the fact that $M=1$ in the case under consideration, this shows that
		\begin{equation*}
		\work(\smol_{L}(\rs_{\infty}))\lesssim \exp(L\frac{\dimp}{\dimp+\alpha})(L+1)^2\log (L+1).
		\end{equation*}
		
		\item $\gamma/\sc=\dimp/\alpha$:  In this case $\dimp/(\dimp+\alpha)=\gamma/(\gamma+\sc)$. Thus, the sum on the right-hand side of \Cref{eq:workbounds} equals $\sum_{l=0}^{L}(L-l+1)\lesssim (L+1)^2$ and we obtain
		\begin{equation*}
		\work(\smol_{L}(\rs_{\infty}))\lesssim \exp(L\frac{\dimp}{\dimp+\alpha})(L+1)^3\log (L+1).
		\end{equation*}
		since $M=1$.
		
		\item $\gamma/\sc>\dimp/\alpha$:  In this case $\dimp/(\dimp+\alpha)<\gamma/(\gamma+\sc)$. Thus, the sum on the right-hand side of \Cref{eq:workbounds} satisfies 
		\begin{equation*}
		\begin{split}
		\sum_{l=0}^{L}&\exp\Big(-l\big(\frac{\dimp}{\dimp+\alpha}-\frac{\gamma}{\gamma+\sc}\big)\Big)(L-l+1)\\
		&=\exp\Big(L\big(\frac{\gamma}{\gamma+\sc}-\frac{\dimp}{\dimp+\alpha}\big)\Big)\sum_{l=0}^{L}\exp\Big(-l\big(\frac{\gamma}{\gamma+\sc}-\frac{\dimp}{\dimp+\alpha}\big)\Big)(l+1)\\
		&\lesssim\exp\Big(L\big(\frac{\gamma}{\gamma+\sc}-\frac{\dimp}{\dimp+\alpha}\big)\Big).
		\end{split}
		\end{equation*}
		If $\wc=\sc$, then $M=1$ and we obtain
		\begin{equation*}
		\begin{split}
		\work(\smol_{L}(\rs_{\infty}))&\lesssim (L+1)M^{\dimp}(\log(L+1)+\log M^{\dimp})\exp(L\frac{\gamma}{\gamma+\sc}))\\
		&\lesssim \exp\Big(L\big(\frac{\gamma}{\gamma+\sc}\big)\Big) (L+1)\log(L+1).
		\end{split}
		\end{equation*} 
	
	 If instead $\wc>\sc$, then $M=\exp(\delta L)$ and we obtain
		\begin{equation*}
		\begin{split}
		\work(\smol_{L}(\rs_{\infty}))&\lesssim (L+1)M^{\dimp}(\log(L+1)+\log M^{\dimp})\exp(L\frac{\gamma}{\gamma+\sc}))\\
		&\lesssim \exp\Big(L\big(\frac{\gamma}{\gamma+\sc}+{\dimp}\delta\big)\Big)(L+1)^2\log(L+1).
				\end{split}
		\end{equation*}
	\end{enumerate}
\textbf{Residual bounds.} First, we show that with high probability 
	\begin{align}
	\label{eq:conv1}
	\|\Id-\P_k\|_{\F\to L^2_{\measure}(\domPS)}&\lesssim M^{-\alpha}\exp(-k\alpha /(\dimp+\alpha))\quad \forall k\in\{0,\dots,L\}.
	\end{align} 
By part (ii) of \Cref{thm:dpls} together with Assumption A2($\infty$), it suffices to show that the event
\begin{equation*}
E:=\{\|{\mathbf{G}_k-\mathbf{I}_k}\|\leq 1/2 \;\forall k\in\N\}
\end{equation*}
has a high probability, where $\mathbf{G}_k$ is the Gramian matrix from \Cref{eq:dpls:computation}. But by the first part of the same theorem, the complementary probability that $\|{\mathbf{G}_k-\mathbf{I}_k}\|\leq 1/2$ for a fixed $k\in\N$ decays as the number of samples $|\domPS_{k}|$ increases. Since the sets $\domPS_{k}$ grow exponentially in $k$, by \Cref{eq:size},  we may conclude using a crude zeroth moment estimate and a geometric series bound:
		\begin{equation}
		\label{eq:probbound}
		\begin{split}
		\prob(E^c)&=\prob\left(\exists k\in\N:\|{\mathbf{G}_k-\mathbf{I}_k}\|>1/2 \right)\\
		&\leq \sum_{k=0}^{\infty}\prob(\|{\mathbf{G}_k-\mathbf{I}_k}\|>1/2)\\
		&\leq 2\sum_{k=0}^\infty |\domPS_{k}|^{-L}\\
		&\leq 2\kappa^{L}M^{-\dimp L}\sum_{k=0}^{\infty}\exp(-kL\frac{\dimp}{\dimp+\alpha})\\
		&=\frac{2\kappa^{L}M^{-\dimp L}}{1-\exp(-L\frac{\dimp}{\dimp+\alpha})}\\
		&\lesssim L^{-L}.
		\end{split}
		\end{equation}
		
		Assuming now that the samples $\domPS_{k}$, $k\in\N$ are such that \Cref{eq:conv1} holds for the associated operators $\P_k$, we obtain
		\begin{equation}
		\label{eq:convbounds}
		\begin{split}
		\norm{\rs_{\infty}-\smol_{L}(\rs_{\infty})}{L^2_{\measure}(\domPS)}
		&=\norm{\rs_{\infty}-\left(\sum_{l=0}^{L}(\rs_{l}-\rs_{l-1})-\sum_{l=0}^{L}(\Id-\Pi_{L-l})(\rs_{l}-\rs_{l-1})\right)}{L^2_{\measure}(\domPS)}\\
		&\leq \norm{\rs_{\infty}-\rs_{L}}{L^2_{\measure}(\domPS)}+\sum_{l=0}^{L}\norm{\Id-\Pi_{L-l}}{\F\to L^2_{\measure}(\domPS)}\norm{\rs_{l}-\rs_{l-1}}{\F}\\
		&\lesssim \exp(-L\frac{\wc}{\gamma+\sc})+M^{-\alpha}\sum_{l=0}^{L}\exp(-(L-l)\frac{\alpha}{\dimp+\alpha})\exp(-l\frac{\sc}{\gamma+\sc})\\
		&= \exp(-L\frac{\wc}{\gamma+\sc})+M^{-\alpha}\exp(-L\frac{\alpha}{\dimp+\alpha})\sum_{l=0}^{L}\exp\Big(l\big(\frac{\alpha}{\dimp+\alpha}-\frac{\sc}{\gamma+\sc}\big)\Big),\\
		\end{split}
		\end{equation}
		where we used Assumption A1.
			Again, we distinguish the cases (a)-(c).
			\begin{enumerate}[(a)]
				\item $\gamma/\sc<\dimp/\alpha$:  In this case $\alpha/(\dimp+\alpha)<\sc/(\gamma+\sc)$. Thus, the sum on the right-hand side of \Cref{eq:convbounds} is uniformly bounded in $L$ and we obtain 
				\begin{equation*}
			\begin{split}
			\norm{\rs_{\infty}-\smol_{L}(\rs_{\infty})}{L^2(\measure)}&\lesssim \exp(-L\frac{\wc}{\gamma+\sc})+\exp(-L\frac{\alpha}{\dimp+\alpha})\\
			&\lesssim \exp(-L\frac{\alpha}{\dimp+\alpha}),
			\end{split}
				\end{equation*}
				where we used the fact that $\wc\geq \sc$ for the last inequality.
				
				\item $\gamma/\sc=\dimp/\alpha$: In this case $\alpha/(\dimp+\alpha)=\sc/(\gamma+\sc)$. Thus, the sum on the right-hand side of \Cref{eq:workbounds} equals $L+1$ and we obtain
				\begin{equation*}
				\begin{split}
				\norm{\rs_{\infty}-\smol_{L}(\rs_{\infty})}{L^2(\measure)}&\lesssim \exp(-L\frac{\wc}{\gamma+\sc})+\exp(-L\frac{\alpha}{\dimp+\alpha})(L+1)\\
				&\lesssim \exp(-L\frac{\alpha}{\dimp+\alpha})(L+1),
				\end{split}
				\end{equation*}
				where we used the fact that $\wc\geq \sc$ for the last inequality.
				
				\item $\gamma/\sc>\dimp/\alpha$: In this case $\alpha/(\dimp+\alpha)>\sc/(\gamma+\sc)$. Thus, the sum on the right-hand side of \Cref{eq:workbounds} is a divergent geometric series and we obtain 
				\begin{equation*}
				\begin{split}
				\norm{\rs_{\infty}-\smol_{L}(\rs_{\infty})}{L^2(\measure)}&\lesssim \exp(-L\frac{\wc}{\gamma+\sc})+M^{-\alpha}\exp(-L\frac{\sc}{\gamma+\sc})\\
				&\lesssim \exp(-L\frac{\wc}{\gamma+\sc}),
				\end{split}
				\end{equation*}
				where we used the definition of $M=\exp(L\delta)$ and $\delta$ in the case $\gamma/\sc>\dimp/\wc$ in the last inequality.
			\end{enumerate}
			
			\textbf{Conclusion.} It remains to choose $L$ such that the residual bound equals $\epsilon$ and insert this choice of $L$ into the work bound. For simplicity, we assume $L$ can be any real number. In practice, rounding up to the next largest value decreases the residual and increases the work only by a constant factor. One final time, we distinguish the cases (a)-(c).
			
		\begin{enumerate}[(a)]
			\item $\gamma/\sc<\dimp/\alpha$: Defining $L$ as the solution of
			
			\begin{equation*}
			\exp(-L\frac{\alpha}{\dimp+\alpha})=\epsilon,
			\end{equation*}
			
			we obtain the second inequality in the following estimate:
			\begin{equation*}
			\begin{split}
		\work(\smol_{L}(\rs_{\infty}))&\lesssim \exp(L\frac{\dimp}{\dimp+\alpha})(L+1)^2\log (L+1)\\
		&\lesssim	\epsilon^{-\rate}|\log\epsilon|^2\log|\log \epsilon|.
			\end{split}
			\end{equation*}
			
			\item $\gamma/\sc=\dimp/\alpha$: 			Since we assumed that $\epsilon\lesssim 1$ there is a unique positive solution of
			\begin{equation*}
			\exp(-L\frac{\alpha}{\dimp+\alpha})(L+1)=\epsilon.
			\end{equation*}
			
			With this choice of $L$ we obtain the second inequality in the following estimate:
			\begin{equation*}
			\begin{split}
			\work(\smol_{L}(\rs_{\infty}))&\lesssim  \exp(L\frac{\dimp}{\dimp+\alpha})(L+1)^3\log(L+1)\\
			&\lesssim \epsilon^{-\rate}|\log\epsilon|^{3+\lambda}\log|\log \epsilon|.
			\end{split}
			\end{equation*}
			
			\item $\gamma/\sc>\dimp/\alpha$: 
		We assume $\wc>\sc$, the case $\wc=\sc$ can be treated analogously. Defining $L$ as the solution of
			\begin{equation*}
			\exp(-L\frac{\wc}{\gamma+\sc})=\epsilon,
			\end{equation*}	
			we obtain the second inequality in the following estimate:
			\begin{equation*}
			\begin{split}
			\work(\smol_{L}(\rs_{\infty}))&\lesssim \exp\Big(L\big(\frac{\gamma}{\gamma+\sc}+\dimp\delta\big)\Big)(L+1)^2\log(L+1)\\
			&\lesssim \epsilon^{-\lambda}|\log\epsilon|^2\log|\log\epsilon|.
			\end{split}
			\end{equation*}
		\end{enumerate}
		In all cases we chose $L$ such that $L\geq |\log\epsilon|$, thus $\prob(E^c)\lesssim L^{-L}\lesssim \epsilon^{\log|\log\epsilon|}$ by \Cref{eq:probbound}.
\end{proof}
\begin{rem}
	The proof does not exploit independence of samples across different $\domPS_k$, $k\in\{0,\dots,L\}$, but instead relies on a simple union bound (see \Cref{eq:probbound}). Thus, we could alternatively first create $\domPS_{L}$ and then define all $\domPS_l$ with $l<L$ as subsets of it.
\end{rem}
\begin{rem}
	To determine the polynomial coefficients of $\Pi_{L-l}(\rs_{l}-\rs_{l-1})$, $l\in\{0,\dots,L\}$, after the functions $\rs_{l}-\rs_{l-1}$ have been evaluated in all $\psmi\in \domPS_{L-l}$, we need to solve linear systems of the form
	\begin{equation}
	\mathbf{G}_k\mathbf{\p}_k=\mathbf{\coeff}_k,\quad k\in\{0,\dots,L\}
	\end{equation}
		as in \Cref{eq:dpls:computation}. In the event $E$ in which the residual estimate of the previous theorem holds, the condition numbers of all matrices $\mathbf{G}_k$ are bounded by 3. Therefore, using a suitable iterative solver  we can determine all coefficients $\mathbf{\p}_k$ to an accuracy of $\epsilon>0$ with $\mathcal{O}(\log\epsilon)$ iterations. Since matrix vector products with $\mathbf{G}_k$ require $\mathcal{O}(m_k^{2\dimp})$ operations by \Cref{rem:matvec}, the associated computational work is, up to logarithmic factors, given by
		\begin{equation*}
		\sum_{k=0}^{L}m_k^{2\dimp}=\sum_{k=0}^{L}M^{2\dimp}\exp(2k\dimp/(\dimp+\alpha))\lesssim M^{2\dimp}\exp(2L\dimp/(\dimp+\alpha)).
		\end{equation*}
	Inspection of the proof of the previous theorem shows that, even if we include this cost in the work specification, the conclusion holds true with slightly different logarithmic factors and the exponent
		\begin{align*}
		\tilde{\rate}&:=\begin{cases}
		2\dimp/\alpha& \text{if } \gamma/\sc\leq 2\dimp/\alpha\\
		\gamma/\sc& \text{if }\gamma/\sc>2\dimp/\alpha,
		\end{cases}
		\end{align*}
		instead of $\rate$ (assuming for simplicity that $\sc=\wc$), provided that we change the definition of the subsequence $m_k$ in \Cref{eq:subseq} to 
		$$
		m_k:=\exp(k/(2\dimp+\alpha)).
		$$
	\end{rem}

To obtain mean square convergence, we replace the least squares approximations $\P_{k}$ by the stabilized versions $\P_k^{\cond}$ from part (iii) of \Cref{thm:dpls}, and define
\begin{equation}
\label{eq:ml2}
\smol^{\cond}_{L}(\rs_{\infty}):=
\P^{\cond}_{L}\rs_{0}+\sum_{l=1}^{L}\P^{\cond}_{L-l}(\rs_{l}-\rs_{l-1}).
\end{equation}
\begin{thm}[\textbf{Mean square convergence}]
	\label{thm:main2}
	Let $0<\epsilon\lesssim 1$. If Assumptions A1, A2(2), and A3 hold, then we may choose $L\in\N$ such that  
		\begin{equation}
		\expect\norm{\rs_{\infty}-\smol^{\cond}_{L}(\rs_{\infty})}{L^2_{\measure}(\domPS)}^2\leq \epsilon^2
		\end{equation}
		and
	\begin{equation*}
	\begin{split}
	\work(\smol_{L}^{\cond}(\rs_{\infty}))\lesssim \epsilon^{-\rate}|\log \epsilon|^{\lrate}\log|\log\epsilon|,
	\end{split}
	\end{equation*}
	with $\rate$ and $\lrate$ as in \Cref{thm:main}.
\end{thm}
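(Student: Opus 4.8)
The plan is to follow the proof of \Cref{thm:main} almost verbatim, replacing part (ii) of \Cref{thm:dpls} by part (iii) and replacing the union bound over the bad events $\{\|\mathbf{G}_k-\mathbf{I}_k\|>1/2\}$ by the deterministic remainder terms on the right-hand side of part (iii). The work bound requires no new argument: the spaces $\vsp_k$, the sample-set sizes $|\domPS_k|$ (coupled via \Cref{eq:size}) and the indices $\n_l$ are chosen exactly as in \Cref{sec:nonadaptive}, Assumption A2(2) does not enter the work specification \Cref{eq:workdef}, and evaluating $\rs_l-\rs_{l-1}$ still costs at most $\work(\rs_l)+\work(\rs_{l-1})\lesssim\exp(l\gamma/(\gamma+\sc))$ by Assumption A3. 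Hence $\work(\smol^{\cond}_L(\rs_\infty))$ obeys the same estimate \Cref{eq:workbounds} and the same case distinction (a)--(c) as in the proof of \Cref{thm:main}, and for the same choices of $L$ we get $\work(\smol^{\cond}_L(\rs_\infty))\lesssim\epsilon^{-\rate}|\log\epsilon|^{\lrate}\log|\log\epsilon|$.

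For the mean-square residual I would set $\rs_{-1}:=0$, telescope $\rs_L=\sum_{l=0}^L(\rs_l-\rs_{l-1})$ and decompose
\begin{equation*}
\rs_\infty-\smol^{\cond}_L(\rs_\infty)=(\rs_\infty-\rs_L)+\sum_{l=0}^L(\Id-\P^{\cond}_{L-l})(\rs_l-\rs_{l-1}).
\end{equation*}
Since $g\mapsto(\expect\|g\|_{L^2_\measure(\domPS)}^2)^{1/2}$ is a norm, Minkowski's inequality gives
\begin{equation*}
\Big(\expect\|\rs_\infty-\smol^{\cond}_L(\rs_\infty)\|_{L^2_\measure(\domPS)}^2\Big)^{1/2}\leq\|\rs_\infty-\rs_L\|_{L^2_\measure(\domPS)}+\sum_{l=0}^L\Big(\expect\|(\Id-\P^{\cond}_{L-l})(\rs_l-\rs_{l-1})\|_{L^2_\measure(\domPS)}^2\Big)^{1/2}.
\end{equation*}
To each summand I would apply part (iii) of \Cref{thm:dpls} with $\rs=\rs_l-\rs_{l-1}$, $\vsp=\vsp_{L-l}$, $\NS=|\domPS_{L-l}|$ and $r=L$ (admissible by \Cref{eq:size}), bound $1+4\kappa/\log|\domPS_{L-l}|\lesssim1$, use Assumption A2(2) as $e_{L-l,2}(\rs_l-\rs_{l-1})\leq e_{L-l,2}(\F)\|\rs_l-\rs_{l-1}\|_\F\lesssim m_{L-l}^{-\alpha}\|\rs_l-\rs_{l-1}\|_\F$, and use Assumption A1 with the triangle inequality to obtain $\|\rs_l-\rs_{l-1}\|_\F\lesssim\n_l^{-\sc}$ and $\|\rs_l-\rs_{l-1}\|_{L^2_\measure(\domPS)}\lesssim1$, exactly as in the proof of \Cref{thm:main}. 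This yields $\big(\expect\|(\Id-\P^{\cond}_{L-l})(\rs_l-\rs_{l-1})\|_{L^2_\measure(\domPS)}^2\big)^{1/2}\lesssim m_{L-l}^{-\alpha}\n_l^{-\sc}+|\domPS_{L-l}|^{-L/2}$.

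Summing over $l$, the $m_{L-l}^{-\alpha}\n_l^{-\sc}$ contributions reproduce exactly the right-hand side of \Cref{eq:convbounds}, with $\|\rs_\infty-\rs_L\|_{L^2_\measure(\domPS)}\lesssim\exp(-L\wc/(\gamma+\sc))$ supplying the leading term there, while $\sum_{l=0}^L|\domPS_{L-l}|^{-L/2}\lesssim|\domPS_0|^{-L/2}$ is super-exponentially small in $L$ by \Cref{eq:size} and is dominated by the other terms. Thus $(\expect\|\rs_\infty-\smol^{\cond}_L(\rs_\infty)\|_{L^2_\measure(\domPS)}^2)^{1/2}$ satisfies the same bound as $\|\rs_\infty-\smol_L(\rs_\infty)\|_{L^2_\measure(\domPS)}$ does in the proof of \Cref{thm:main}, and choosing $L$ as in cases (a)--(c) there makes it $\leq\epsilon$; together with the work bound this completes the argument. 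The only genuinely new point relative to \Cref{thm:main} is the passage from a per-function, in-probability estimate to a mean-square one, which I expect to be the only delicate step; it is resolved cleanly by the Minkowski inequality in the $L^2$ norm over the product of the underlying probability space with $(\domPS,\measure)$, and --- as in the remark following \Cref{thm:main} --- it uses no independence of the sample sets $\domPS_k$ across levels.
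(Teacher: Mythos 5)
Your proposal is correct and follows essentially the same route as the paper's proof: the work bound is reused unchanged, the error is decomposed via the telescoping representation, Minkowski's inequality is applied to the norm $(\expect\norm{\cdot}{L^2_{\measure}(\domPS)}^2)^{1/2}$, and part (iii) of \Cref{thm:dpls} is invoked levelwise, after which the conclusion is drawn exactly as in \Cref{thm:main}. The only (harmless) cosmetic difference is in the treatment of the $\NS^{-r}$ remainder: the paper weakens $r=L$ to $2\alpha/\dimp$ (using $L\geq 2\alpha/\dimp$ for small $\epsilon$) and keeps the $L^2_{\measure}$ decay of $\rs_l-\rs_{l-1}$ so that this term matches the structure of the best-approximation term, whereas you keep the full power $|\domPS_{L-l}|^{-L/2}$, bound $\norm{\rs_l-\rs_{l-1}}{L^2_{\measure}(\domPS)}\lesssim 1$, and absorb the resulting super-exponentially small contribution — both are valid.
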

\begin{proof}
	The work bounds from the proof of \Cref{thm:main} hold unchanged.
	
We next establish residual bounds for arbitrary $L\in\N$ as before, using the error representation
	\begin{equation*}
	\rs_{\infty}-\smol^{\cond}_{L}(\rs_{\infty})=\rs_{\infty}-\rs_{L}+\sum_{l=0}^{L}(\Id-\P^{\cond}_{L-l})(\rs_{l}-\rs_{l-1}).
	\end{equation*}
	The triangle inequality of the norm $(\expect\norm{\cdot}{L^2_{\measure}(\domPS)}^2)^{1/2}$ implies that
	\begin{equation*}
	\begin{split}
	\left(\expect\norm{\rs_{\infty}-\smol^{\cond}_{L}(\rs_{\infty})}{L^2_{\measure}(\domPS)}^2\right)^{1/2}&\leq \left(\norm{\rs_{\infty}-\rs_{L}}{L^2_{\measure}(\domPS)}^2\right)^{1/2}+\sum_{l=0}^{L}\left(\expect\norm{(\Id-\P^{\cond}_{L-l})(\rs_{l}-\rs_{l-1})}{L^2_{\measure}(\domPS)}^2\right)^{1/2}\\
		&\lesssim \norm{\rs_{\infty}-\rs_{L}}{L^2(\measure)}+\sum_{l=0}^{L}\left(e_{\vsp_{L-l},2}^2(\rs_{l}-\rs_{l-1})+\norm{\rs_{l}-\rs_{l-1}}{L^2_{\measure}(\domPS)}^2|\domPS_{L-l}|^{-2\alpha/\dimp}\right)^{1/2}=:(\star)
	\end{split}
	\end{equation*}
	where we used part (iii) of \Cref{thm:dpls} together with the fact that $L\geq 2\alpha/\dimp$ for small enough $\epsilon$ for the second inequality. We observe that
	\begin{itemize}
		\item by Assumption {A1}, we have
		\begin{equation*}
		\norm{\rs_{\infty}-\rs_{L}}{L^2_{\measure}(\domPS)}\lesssim  \exp(-L\frac{\wc}{\gamma+\sc})
		\end{equation*}
		\item by Assumptions {A1} and {A2(2)}, we have 
		\begin{equation*}
		\begin{split}
		e_{\vsp_{L-l},2}^2(\rs_{l}-\rs_{l-1})\lesssim \left(M^{-\alpha}\exp(-(L-l)\frac{\alpha}{\dimp+\alpha})\exp(-l\frac{\sc}{\gamma+\sc})\right)^{2}
		\end{split}
		\end{equation*}

		\item by \Cref{eq:size} and Assumption A1, we have 
		\begin{equation*}
		\begin{split}
		\norm{\rs_{l}-\rs_{l-1}}{L^2_{\measure}(\domPS)}^2|\domPS_{L-l}|^{-2\alpha/\dimp}&\lesssim \left(M^{-\alpha}\exp(-l\frac{\wc}{\gamma+\sc})\exp(-(L-l)\frac{\alpha}{\dimp+\alpha})\right)^2.
		\end{split}
		\end{equation*}
	\end{itemize}
	Combining these observations we arrive at 
	\begin{equation*}
	\begin{split}
	(\star)&\lesssim \exp(-L\frac{\wc}{\gamma+\sc})+M^{-\alpha}\sum_{l=0}^L\exp\left(-(L-l)\frac{\alpha}{\dimp+\alpha}-l\frac{\sc}{\gamma+\sc}\right)\\
	&\lesssim \exp(-L\frac{\wc}{\gamma+\sc})+M^{-\alpha}\exp(-L\frac{\alpha}{\dimp+\alpha})\sum_{l=0}^{L}\exp\left(l\bigg(\frac{\alpha}{\dimp+\alpha}-\frac{\sc}{\gamma+\sc}\bigg)\right).
	\end{split}
	\end{equation*}
From here, the proof may be concluded exactly as that of \Cref{thm:main}.
\end{proof}
\section{An adaptive algorithm}
\label{sec:adaptive}
We introduce in this section an adaptive algorithm for the case when an optimal sequence of polynomial subspaces, the rate of convergence $\rs_l\to \rs_{\infty}$, or the cost for evaluations of $\rs_l$ are unknown. 

To describe our algorithm, we restrict ourselves to the case when $\domPS=[0,1]^{\dps}$, $\dps\in\N$ and when $\measure=\lambda$ is the Lebesgue measure.
 By the results in \Cref{ssec:arcsine}, we may then use samples and weights from the arcsine distribution instead of the optimal distributions. This allows us to keep previous samples when we extend the polynomial subspaces, whereas using the optimal distribution, which depends on the polynomial subspace, would require throwing away all samples each time the polynomial subspace is extended.
 
 We next describe the building blocks that are used by our adaptive algorithm to select polynomial approximation subspaces.
\begin{definition}[\textbf{Multivariate Legendre polynomials}]\leavevmode
\begin{enumerate}[(i)]
	\item We denote by $(\leg_i)_{i\in\N}$ the univariate $L^2_{\lambda}([0,1])$-orthonormal Legendre polynomials and define their tensor products
	\begin{equation*}
	\begin{split}
	\leg_{\mip}:=\bigotimes_{j=1}^\dps\leg_{\mip_j}\colon [0,1]^d\to\R,\\
	\leg_{\mip}(\psmi):=\prod_{j=1}^{d}\leg_{\mip_j}(\psmi_j)
	\end{split}
	\end{equation*}
	 for $\mip\in\N^\dps$.
\item For each multi-index $\mi\in\N^{d}$, we define the polynomial subspace
\begin{equation*}
\pss_{\mi}:=\vspan\{\leg_{\mip}:2^{\mi}-1\leq\mip< 2^{\mi+1}-1\}\subset L^2([0,1]^d,\lambda).
\end{equation*}

\end{enumerate}
\end{definition}
\begin{rem}[\textbf{Orthonormal decomposition}]
	Since polynomials are dense in $L^2_{\lambda}([0,1]^d)$, the subspaces $(\pss_{\mi})_{\mi\in\N^d}$ form an orthonormal decomposition of $L^2_{\lambda}([0,1]^d)$. 	We use exponentially large subspaces instead of the simpler, one-dimensional subspaces $\pss_{\mi}=\R\cdot\leg_{\mi}$ to avoid computational overhead resulting from slow construction of large polynomial subspaces.
\end{rem}
We use the notation $\rs_{-1}:=0$ to avoid separate treatment of the term corresponding to $l=0$ in the following.
To describe a multilevel approximation, 
 we need to construct a sequence $(\vsp_k)_{k=0}^{L}$ of polynomial subspaces, such that the difference $\rs_{l}-\rs_{l-1}$ is projected onto $\vsp_{L-l}$ using weighted least squares approximation. The final approximation is then defined as 
  \begin{equation}
  \label{eq:adaptive}
  \sum_{l=0}^{L}\P_{{L-l}}(\rs_l-\rs_{l-1}).
  \end{equation}
  where $\P_{k}$ projects onto $\vsp_{k}$ for $0\leq k\leq L$.
As in \Cref{sec:nonadaptive}, if the samples used by $\P_{{k}}$ are distributed according to the optimal distribution of $\vsp_{k}$, then we require that the number of samples $\NS_{k}$ satisfy
 \begin{equation}
 \label{eq:adaptivestability}
 \kappa \frac{\NS_k}{\log \NS_k}\geq \dim \vsp_{k}
 \end{equation}
 for some $\kappa>0$. However, we then have to throw away all samples each time the space $\vsp_{k}$ is extended, as this changes the optimal distribution. As an alternative, we may use samples from the arcsine distribution, which is independent of the polynomial subspaces $\vsp_{k}$ and thus allows us to reuse samples and corresponding function evaluations. By \Cref{ssec:arcsine}, this increases the number of required samples only by a constant factor (that depends exponentially on the dimension $d$, however).\\

  To construct the sequence of polynomial subspaces in an adaptive fashion, our algorithm constructs a (finite) downward closed multi-index set $\mis\subset\N^{d+1}$. Given such a set, we let 
  \begin{equation*}
\vsp_{k}:=\bigoplus_{\mi\in\N^{d}: (\mi,L-k)\in\mis}\pss_{\mi}\quad 0\leq k\leq L,
\end{equation*}
where
\begin{equation*}
L:=\max\{l\in\N:\exists\mi\in\N^{d} \text{ s.t. }(\mi,l)\in\mis\}<\infty,
\end{equation*} 
which means that we project the difference $\rs_l-\rs_{l-1}$ onto the subspace $\vsp_{L-l}$ that is determined by the slice $\mis_{l}:=\{\mi\in\N^d:(\mi,l)\in \mis\}$ of the multi-index set $\mis$.
To construct $\mis$, starting with $\mathcal{I}=\{\mathbf{0}\}$, our algorithm adds one multi-index at a time according to the following procedure, which resembles algorithms for adaptive sparse grid integration \cite{MR2163199,gerstner2003dimension}.
We call
\begin{equation*}
\mia:=\{(\mi,l)\in \N^{d+1}\setminus \mis:\mis\cup\{\mi,l\}\text{ is downward closed}\}
\end{equation*}
the set of \emph{admissible multi-indices}.
\begin{enumerate}[(i)]
	\item
For each admissible multi-index $(\mathbf{k},l)$, we estimate the norm of the projection of $\rs_l-\rs_{l-1}$ onto $\pss_{\mi}$. This estimate represents the gain that is made by adding $(\mi,l)$ to $\mis$.
 Furthermore, we estimate the work that adding this multi-index would incur.
\item We expand $\mathcal{I}$ by the multi-index that maximizes the ratio between the gain and work estimates.
\end{enumerate}
\begin{figure}[ht]
	\centering
\begin{tikzpicture}

\begin{axis}[
xmin=-0.5, xmax=3.5,
ymin=-0.5, ymax=3.5,
axis on top,
xmajorgrids,
ymajorgrids,
xlabel=$\mi$,
ylabel=$l$
]
\addplot [only marks,mark size=4, draw=black, fill=black]
table {%
x                      y
+0.000000000000000e+00 +0.000000000000000e+00
+1.000000000000000e+00 +0.000000000000000e+00
+0.000000000000000e+00 +1.000000000000000e+00
+2.000000000000000e+00 +0.000000000000000e+00
+1.000000000000000e+00 +1.000000000000000e+00
};
\addplot [only marks, mark size=4,draw=blue, fill=blue]
table {%
x                      y
+3.000000000000000e+00 +0.000000000000000e+00
+2.000000000000000e+00 +1.000000000000000e+00
+0.000000000000000e+00 +2.000000000000000e+00
};
\addplot [only marks,mark size=4,line width=1.5, draw=red, fill=black]
table {%
x                      y
+1.000000000000000e+00 +1.000000000000000e+00
+2.000000000000000e+00 +0.000000000000000e+00
};
\addlegendentry{$\mis$};
\addlegendentry{$\mia$};
\addlegendentry{$\neighbors(2,1)$};
\end{axis}

\end{tikzpicture}
	\caption{Example with $d=1$ of a multi-index set $\mis$ and the associated set of admissible multi-indices $\mia$, as well as neighbors $\neighbors(2,1)$ of  $(2,1)\in\mia$. In this example $L=1$,  $\vsp_{1}=\vspan\{1,\psmi,\dots,\psmi^{6}\}=\vspan\{\leg_0(\psmi),\dots,\leg_{6}(\psmi)\}$, and  $\vsp_{0}=\vspan\{1,\psmi,\psmi^2\}=\vspan\{\leg_0(\psmi),\leg_1(\psmi),\leg_2(\psmi)\}$.}
	\label{fig:dc}
\end{figure}

We next explain how we arrive at the gain and work estimates that are required in step (i).
To estimate the norm of the orthogonal projection of $\rs_l-\rs_{l-1}$ onto $\pss_{\mi}$, we compute the arithmetic average of corresponding estimates for the neighbors $\neighbors(\mi,l)=\{(\mi^{(1)},l^{(1)}),\dots\}$ of $(\mi,l)$ in $\mathcal{I}$. Here, by neighbor we mean elements of $\mis$ that differ from $(\mi,l)$ in a single entry by $1$, see \Cref{fig:dc}.
For each such neighbor, we estimate the norm of the orthogonal projection $\operatorname{Proj}_{\mi^{(j)}}(\rs_{l^{(j)}}-\rs_{l^{(j)}-1})$ of $\rs_{l^{(j)}}-\rs_{l^{(j)}-1}$ onto $\pss_{\mi^{(j)}}$ simply by computing the Euclidean norm of those basis coefficients of $\P_{L-l^{(j)}}(\rs_{l^{(j)}}-\rs_{l^{(j)}-1})$ that belong to $\pss_{\mi^{(j)}}$. (Recall that $\P_{L-l^{(j)}}$ is a discrete projection onto the space $\vsp_{L-l^{(j)}}$ of which $\pss_{\mi^{(j)}}$ is a subspace since $\mi^{(j)}\in\mis_{l^{(j)}}$.)
The final estimate can be expressed as
\begin{equation*}
\frac{1}{|\neighbors(\mi,l)|}\sum_{j=1}^{|\neighbors(\mi,l)|}\norm{\operatorname{Proj}_{\mi^{(j)}}\P_{L-l^{(j)}}(\rs_{l^{(j)}}-\rs_{l^{(j)}-1})}{L^2_{\lambda}([0,1]^d)}.
\end{equation*}

To estimate the work that adding $(\mi,l)$ to $\mis$ incurs, we observe that \Cref{eq:adaptivestability} tells us exactly how many new samples are needed. More specifically, if we denote by $\NS(\mis_{l})$ the minimal solution of \Cref{eq:adaptivestability} for the polynomial subspace determined by $\mis_l$, then the required number of new samples of $\rs_{l}-\rs_{l-1}$ is $\NS(\mis_{l}\cup\{\mi\})-\NS(\mis_{l})$. It therefore remains to determine the  work per sample,  $\work(\rs_{l}-\rs_{l-1})$. If this work is unknown, then we store for each level $l$ an estimate, which we update with the observed computational work divided by the number of generated samples each time $\mathcal{I}_{l}$ changes. The final estimate of the work associated with $(\mi,l)$ is
\begin{equation*}
  \work(\rs_{l}-\rs_{l-1}) \cdot\left(\NS(\mis_{l}\cup\{\mi\})-\NS(\mis_{l})\right).
\end{equation*}

\Cref{alg:adaptive} gives a summary of our algorithm in pseudocode.

\begin{algorithm}[ht]
\caption{Adaptive multilevel algorithm.}\label{alg:adaptive}
\begin{algorithmic}[1]
		\Function{MLA}{$(\rs_l)_{l\in\N}$,STEPS}
		\State  $\mis\gets \{\mathbf{0}\}$
		\State $X_l\gets \varnothing\;\forall l\in\N$
		\State $\Delta_l\gets 0\;\forall l\in\N$
		\For{$0\leq i<\text{STEPS}$}
			\State $(\mi,l)\gets \argmax_{(\mi,l)\in\mia}\frac{\text{GAIN}((\mi,l),(\Delta_l)_{l\in\N},\mis)}{\text{WORK}((\mi,l),\mis)}$
			\State $N_{+}\gets \NS(\mis_{l}\cup\{\mathbf{k}\})-\NS(\mis_{l})$
			\State $\mis\gets \mathcal{I}\cup\{(\mi,l)\}$
			\For{$0\leq j<N_{+}$}
				\State Generate $\psmi\sim \arcsine_{d}$
				\State $y\gets (\rs_{l}-\rs_{l-1})(\psmi)$
				\State $X_l\gets X_l\cup\{(\psmi,y)\}$
			\EndFor
			\State $\Delta_l\gets \P_{L-l}(\rs_{l}-\rs_{l-1})$
		\EndFor
		\State \Return $\sum_{0\leq l\leq L}\Delta_l$
	\EndFunction
	\item[]
	\Function{GAIN}{$(\mi,l)$,$(\Delta_l)_{l\in\N}$,$\mathcal{I}$}
		\State $s=0$
		\For{$(\mi^{(j)},l^{(j)})\in \neighbors(\mi,l)$}
			\State $s\gets s+\|\operatorname{Proj}_{\mi^{(j)}}\Delta_{l^{(j)}} \|_{L^2_{\lambda}}$
		\EndFor

		\State \Return $s/|\neighbors(\mathbf{k},l)|$
	\EndFunction
	\item[]
	\Function{WORK}{$(\mi,l)$,$\mathcal{I}$}
		\State \Return $\work(\rs_{l}-\rs_{l-1})\cdot\left(\NS(\mis_{l}\cup\{\mi\})-\NS(\mis_{l})\right)$
	\EndFunction

\end{algorithmic}
\end{algorithm}

\section{Application to parametric PDE}
\label{sec:uq}

We assume in this section that $\pde(\cdot,\psmi)$ is the solution of some partial differential equation (PDE) with parameters $\psmi\in\domPS\subset\R^\dps$ and that we are interested in the \emph{response surface}
\begin{equation*}
\psmi\mapsto \rs_{\infty}(\psmi):=\QoI(\pde(\cdot,\psmi))\in\R,
\end{equation*}
where $\QoI(\pde(\cdot,\psmi))$ is a real-valued quantity of interest, such as a point evaluation, a spatial average, or a maximum.  
In most situations, we cannot evaluate $\rs_{\infty}(\psmi)$ exactly, as this would require an analytic solution of the PDE. Instead, we have to work with discretized solutions $\pde_{n}(\cdot,\psmi)$ for each $\psmi$, which yield approximate response surfaces
\begin{equation*}
\begin{split}
\rs_{n}\colon &\domPS\to\R \\
&\psmi\mapsto Q(\pde_{n}(\cdot,\psmi)).
\end{split}
\end{equation*}
For example, if we employ finite element discretizations with maximal element diameter $h:=n^{-1}$, then the work required for evaluations of $\rs_{n}$ grows like $h^{-\gamma}=n^{\gamma}$ for some $\gamma>0$. To apply the multilevel method of \Cref{sec:nonadaptive}, we need to verify the remaining Assumptions A1 and A2 from there. 

As a motivating example, we consider a linear elliptic second order PDE, which has been extensively studied in recent years \cite{harbrecht2013multilevel,ChkifaCohenSchwab2015,CohenDevoreSchwab2011,BabuskaTemponeZouraris2004},
\begin{equation}
\label{eq:UQex}
\begin{aligned}
-\nabla \cdot (a(x,\psmi) \nabla \pde(x,\psmi))&=\rhs(x)&\text{ in }U\subset\R^{\dpde}\\
\pde(x,\psmi)&=0&\text{ on } \partial U,
\end{aligned}
\end{equation}
with $a:U\times \domPS\to\R$ and $\domPS:=[0,1]^\dps$.

\begin{pro}
	\label{pro:finite}
	For any $n\in\N$, let $\pde_{n}$ be finite element approximations of order $r\geq 1$ and maximal element diameter $h:=(n+1)^{-1}$, and let $\rs_{n}(\psmi):=Q(\pde_n(\cdot,\psmi))$.
	Assume that $g$ and $U$ are sufficiently smooth, that 
	\begin{equation}
		\inf_{x\in U,\psmi\in\domPS}a(x,\psmi)>0,
	\end{equation}
	and that $Q$ is a continuous linear functional on $L^2(U)$.

	\begin{enumerate}[(i)]
		\item If $a\in C^{r}(U\times\domPS)$ for some $r\geq 1$, then  
		\begin{equation*}
		\norm{\rs_{\infty}-\rs_{n}}{L^2(\domPS)}\lesssim h^{r+1}
		\end{equation*}
		and
		\begin{equation*}
		\norm{\rs_{\infty}-\rs_{n}}{C^{r-1}(\domPS)}\lesssim h^{2}.
		\end{equation*}
		\item If for some $r,s\geq 1$ we have
		\begin{equation}
		\label{eq:tensor}
		\begin{split}
		a\in C^{r}(U)\otimes C^{s}(\domPS):=\{a\colon U\times \domPS\to\R :  \norm{\partial_{x}^{\mathbf{r}}\partial_{\psmi}^{\mathbf{s}}a}{C^0(U\times\domPS)}<\infty\;\forall\;|\mathbf{r}|_{1}\leq r,|\mathbf{s}|_{1}\leq s\}
		,
		\end{split}
		\end{equation} then 
		\begin{equation*}
		\norm{\rs_{\infty}-\rs_{n}}{C^{s}(\domPS)}\lesssim h^{r+1}.
		\end{equation*}
		
	\end{enumerate}
\end{pro}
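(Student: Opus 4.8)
The plan is to reduce both parts to classical finite element error estimates for the parametric elliptic family \Cref{eq:UQex} and then to propagate these estimates through derivatives in the parameter $\psmi$, finally applying the bounded functional $Q$.

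\textbf{Base estimate.} For each fixed $\psmi\in\domPS$, the solution $\pde(\cdot,\psmi)\in H^1_0(U)$ of \Cref{eq:UQex} and its order-$r$ finite element approximation $\pde_n(\cdot,\psmi)$ satisfy the usual C\'ea and Aubin--Nitsche estimates. Because $a(\cdot,\psmi)\in C^r(U)$ uniformly in $\psmi$, $a$ is uniformly bounded below, and $g$, $U$ are smooth, elliptic regularity gives $\pde(\cdot,\psmi)\in H^{r+1}(U)$ with a norm bound uniform in $\psmi$, whence $\sup_{\psmi}\norm{\pde(\cdot,\psmi)-\pde_n(\cdot,\psmi)}{L^2(U)}\lesssim h^{r+1}$. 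Since $Q$ is a bounded linear functional on $L^2(U)$, applying it yields $\norm{\rs_\infty-\rs_n}{C^0(\domPS)}\lesssim h^{r+1}$, which already gives the $L^2(\domPS)$ estimate in (i).

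\textbf{Parameter derivatives.} Differentiating \Cref{eq:UQex} and the Galerkin identity $\int_U a\,\nabla\pde_n\cdot\nabla v_h=\int_U g\,v_h$ with respect to $\psmi$ and expanding by the Leibniz rule shows that, for any multi-index $\mathbf{s}$, the function $\partial_\psmi^{\mathbf{s}}\pde$ solves the elliptic problem with the same operator $-\nabla\cdot(a(\cdot,\psmi)\nabla\,\cdot\,)$ and right-hand side $-\sum_{\mathbf{0}\neq\mathbf{t}\leq\mathbf{s}}\binom{\mathbf{s}}{\mathbf{t}}\,\nabla\cdot\big(\partial_\psmi^{\mathbf{t}}a\,\nabla\partial_\psmi^{\mathbf{s}-\mathbf{t}}\pde\big)$, while $\partial_\psmi^{\mathbf{s}}\pde_n$ lies in the finite element space and solves the analogous discrete problem with $\pde$ replaced by $\pde_n$ in that right-hand side. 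I would then bound $\sup_\psmi\norm{\partial_\psmi^{\mathbf{s}}(\pde-\pde_n)}{L^2(U)}$ by induction on $|\mathbf{s}|$: at each step (a) elliptic regularity controls the spatial smoothness of $\partial_\psmi^{\mathbf{s}}\pde$ --- in case (i) each parameter derivative consumes one order of spatial smoothness of $a$, so for $|\mathbf{s}|\leq r-1$ one only gets $\partial_\psmi^{\mathbf{s}}\pde\in H^2(U)$, whereas the tensor hypothesis $a\in C^r(U)\otimes C^s(\domPS)$ of (ii) retains full $C^r$ spatial smoothness through $s$ parameter derivatives and hence $\partial_\psmi^{\mathbf{s}}\pde\in H^{r+1}(U)$ for $|\mathbf{s}|\leq s$; and (b) a combination of C\'ea's lemma with an Aubin--Nitsche duality argument turns the right-hand side defect --- whose terms are precisely the lower-order errors $\partial_\psmi^{\mathbf{s}-\mathbf{t}}(\pde-\pde_n)$ supplied by the induction hypothesis --- into the desired $L^2(U)$ rate, the duality step buying the extra power of $h$ needed to pass from an $H^1$-level to an $L^2$-level bound. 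This produces $\sup_\psmi\norm{\partial_\psmi^{\mathbf{s}}(\pde-\pde_n)}{L^2(U)}\lesssim h^2$ for $|\mathbf{s}|\leq r-1$ in case (i), and $\lesssim h^{r+1}$ for $|\mathbf{s}|\leq s$ in case (ii).

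\textbf{Conclusion and main obstacle.} Since $Q$ is linear and continuous on $L^2(U)$ it commutes with $\partial_\psmi^{\mathbf{s}}$, so $\partial_\psmi^{\mathbf{s}}(\rs_\infty-\rs_n)(\psmi)=Q\big(\partial_\psmi^{\mathbf{s}}(\pde-\pde_n)(\cdot,\psmi)\big)$; taking the supremum over $\psmi$ and over $|\mathbf{s}|\leq r-1$ (resp.\ $|\mathbf{s}|\leq s$) gives the $C^{r-1}(\domPS)$ bound in (i) (resp.\ the $C^s(\domPS)$ bound in (ii)). I expect the main difficulty to be the inductive step (b): because $\partial_\psmi^{\mathbf{s}}\pde_n$ is \emph{not} the Galerkin projection of $\partial_\psmi^{\mathbf{s}}\pde$ --- the discrete right-hand side involves $\pde_n$ rather than $\pde$ --- one must track carefully how the lower-order errors enter through the Leibniz-expanded right-hand sides and through the duality argument, and one must handle products of the merely H\"older-continuous coefficient derivatives $\partial_\psmi^{\mathbf{t}}a$ with Sobolev functions when invoking elliptic regularity; the remaining steps are routine bookkeeping with standard finite element estimates.
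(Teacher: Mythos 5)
Your proposal follows essentially the same route as the paper: differentiate the PDE (and its discretization) with respect to $\psmi$, observe that the parameter derivatives of $\pde$ solve problems with the same elliptic operator and right-hand sides built from lower-order data, use elliptic regularity to track the spatial smoothness (losing one order per parameter derivative in case (i), losing none under the tensor hypothesis in case (ii)), and then apply standard finite element estimates and the bounded functional $Q$ to the parameter derivatives of the error. Your explicit treatment of the fact that $\partial_\psmi^{\mathbf{s}}\pde_n$ is not the Galerkin projection of $\partial_\psmi^{\mathbf{s}}\pde$ is a point the paper subsumes under ``standard finite-element theory,'' so your sketch is, if anything, more careful on that step.
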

\begin{proof}
	In both cases, the standard theory of second order elliptic differential equations shows that $\psmi\mapsto \pde(\cdot,\psmi)$ is well defined as a map from $\domPS$ into $H^{r+1}(U)$, with 
	\begin{equation*}
	\norm{\pde}{L^\infty(\domPS;H^{r+1}(U))}<\infty.
	\end{equation*}
	Next, we observe that the derivatives $\partial_{\psmi_{j}}\pde(\cdot,\psmi)$, $j\in \{1,\dots,d\}$ satisfy PDEs with the same operator as in \Cref{eq:UQex} but with new right-hand sides
	\begin{equation*}
	\tilde{\rhs}(x):=\nabla\cdot(\partial_{\psmi_j}a(x,\psmi)\nabla \pde(x,\psmi)).
	\end{equation*}
	The regularity of this right-hand side now depends on the assumptions on the coefficient $a$.
	In case (i) we have $\partial_{\psmi_j}a(\cdot,\psmi)\in C^{r-1}(U)$ and thus $\tilde{\rhs}\in H^{r-2}(U)$. Therefore, $\partial_{\psmi_{j}}\pde(\cdot,\psmi)\in H^{r}(U)$ for each $\psmi\in\domPS$ and, moreover, we have the uniform estimate
		\begin{equation*}
		\norm{\partial_{\psmi_j}\pde}{L^\infty(\domPS;H^{r}(U))}<\infty.
		\end{equation*}
		In case (ii) we have $\partial_{\psmi_j}a(\cdot,\psmi)\in C^{r}(U)$ and thus $\tilde{\rhs}\in H^{r-1}(U)$. Therefore, $\partial_{\psmi_{j}}\pde(\cdot,\psmi)\in H^{r+1}(U)$ for each $\psmi\in\domPS$ and, moreover, we have the uniform estimate
		\begin{equation*}
		\norm{\partial_{\psmi_j}\pde}{L^\infty(\domPS;H^{r+1}(U))}<\infty.
		\end{equation*}
		Repeatedly applying these arguments yields
			\begin{equation*}
			\norm{\pde}{C^{r-1}(\domPS;H^2(U))}<\infty,
			\end{equation*}
			and
			\begin{equation*}
			\norm{\pde}{C^{s}(\domPS;H^{r+1}(U))}<\infty,
			\end{equation*}
		in cases (i) and (ii), respectively. We may now conclude by using standard finite-element theory. In case (i), we have
		\begin{equation*}
		\begin{split}
		\norm{\rs_{\infty}-\rs_{n}}{L^2(\domPS)}&\leq \norm{Q}{}\norm{\pde-\pde_{n}}{L^2(\domPS;L^2(U))}\\
		&\lesssim h^{r+1}\norm{\pde}{L^2(\domPS;H^{r+1}(U))}
		\end{split}
		\end{equation*}
		and
		\begin{equation*}
		\begin{split}
		\norm{\rs_{\infty}-\rs_{n}}{C^{r-1}(\domPS)}&\lesssim \norm{\pde-\pde_{n}}{C^{r-1}(\domPS;L^2(U))}\\
		&\lesssim h^{2}\norm{\pde}{C^{r-1}(\domPS;H^{2}(U))},
		\end{split}
		\end{equation*} 
	whereas in case (ii), we have
	\begin{equation*}
	\begin{split}
	\norm{\rs_{\infty}-\rs_{n}}{C^{s}(\domPS)}&\lesssim \norm{\pde-\pde_{n}}{C^{s}(\domPS;L^2(U))}\\
	&\lesssim h^{r+1}\norm{\pde}{C^{s}(\domPS;H^{r+1}(U))},
	\end{split}
	\end{equation*}
\end{proof}
\begin{rem}
In case (i) of the previous proposition, differentiating with respect to $\psmi$ reduces the number of available derivatives in $x$, which are required for convergence of the finite element method. Thus, the convergence in $L^2(\domPS)$ is faster than that in $C^{r-1}(\domPS)$. Case (ii), on the other hand, describes the so-called \emph{mixed smoothness} of the coefficient in $x$ and $\psmi$, meaning that differentiating in $\psmi$ does not affect the differentiability with respect to $x$.  
\end{rem}
If the coefficients depend analytically on $\psmi$, then the same holds for $\rs_{\infty}$, which can be exploited to obtain algebraic polynomial approximability rates of $\rs_{\infty}$ even in the case of infinite-dimensional parameters \cite{ChkifaCohenSchwab2015,Haji-AliNobileTamelliniEtAl2015}, as shown below.



\begin{pro}
\label{pro:UQ}
Let $\domPS:=[-1,1]^{\infty}$.
Assume that $Q$ is a linear and continuous functional on $L^2(U)$, that $0<\inf_{x,\psmi}  a(x,\psmi)\leq \sup_{x,\psmi}  a(x,\psmi)<\infty$, and that
	\begin{align*}
	a(x,\psmi)&=\bar{a}(x)+\sum_{j=0}^{\infty}\ps_j\psi_j(x),\\
		a(x,\psmi)&=\bar{a}(x)+\left(\sum_{j=0}^{\infty}\ps_j\psi_j(x)\right)^2,\quad \\
	\intertext{or}
		a(x,\psmi)&=\exp\left(\sum_{j=0}^{\infty}\ps_j\psi_j(x)\right). 
	\end{align*}
	If there exists $r_{\max}>1$ such that
	\begin{equation*}
	 \|\psi_j\|_{C^{r}(U)}\lesssim (j+1)^{-(r_{\max}+1 - r)}\quad\forall j\in\N, \; 0\leq r< r_{\max},
	 \end{equation*} 
	 	 then, for any $r\in \N$ with $1\leq r< r_{\max}$, finite element approximations with maximal element diameter $h:=(n+1)^{-1}$ achieve
\begin{equation*}
\norm{\rs_{\infty}-\rs_{\n}}{L^{\infty}(\domPS)}\leq C h^{r+1}
\end{equation*}
with a constant $C$ independent of $n$.
Furthermore, for any such $r$, there is a sequence $(\vsp_\dvsp)_{\dvsp\in\N}$ of downward closed polynomial spaces with $\dim \vsp_\dvsp=m$ such that finite element approximations with order $r$ and maximal diameter $h:=(n+1)^{-1}$ achieve 
\begin{equation*}
e_{\vsp_\dvsp,1,\infty}(\rs_{\infty}-\rs_\n)\leq C(\dvsp+1)^{-\uqsummability}h^{r+1} \quad \forall\, 0<\uqsummability<r_{\max}- r
\end{equation*}
with a constant $C$ independent of $n$ and $m$.
\end{pro}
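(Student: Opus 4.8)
The plan is to show that $\psmi\mapsto\pde(\cdot,\psmi)$ and $\psmi\mapsto\pde_\n(\cdot,\psmi)$ — hence $\rs_\infty$ and $\rs_\n$ — extend holomorphically to a poly-ellipse around $[-1,1]^\infty$ with a priori bounds that are uniform in $\n$ and in the poly-ellipse, and then to run the standard downward-closed sparse polynomial approximation machinery of \cite{ChkifaCohenSchwab2015}. The single bookkeeping point to respect throughout is the finite-element factor $h^{r+1}$: instead of bounding $\rs_\infty$ and $\rs_\n$ separately one bounds the difference $\pde(\cdot,\psmi)-\pde_\n(\cdot,\psmi)$ directly, so that this factor multiplies every holomorphy estimate and reappears in the final rate.

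I would first assemble the deterministic ingredients. Since $r<r_{\max}$, the exponent $r_{\max}+1-r$ exceeds $1$, so $\sum_j\|\psi_j\|_{C^r(U)}<\infty$; writing $b(x,\psmi):=\sum_j\ps_j\psi_j(x)$ and using that $C^r(U)$ is a Banach algebra for bounded, sufficiently regular $U$, one obtains $\sup_{\psmi\in\domPS}\|a(\cdot,\psmi)\|_{C^r(U)}<\infty$ in all three cases ($a-\bar a$ equal to $b$, to $b^2$, or $a=\exp(b)$), while uniform ellipticity is assumed outright. With this, the elliptic-regularity and finite-element arguments of the proof of \Cref{pro:finite} apply essentially verbatim: $C^r$-regularity of $a(\cdot,\psmi)$ gives $\pde(\cdot,\psmi)\in H^{r+1}(U)$ with a $\psmi$-uniform bound, the adjoint problem with $L^2$ right-hand side gives a $\psmi$-uniform $H^2(U)$ bound, and an Aubin--Nitsche duality estimate for the linear functional $\QoI$ yields $|\QoI(\pde(\cdot,\psmi)-\pde_\n(\cdot,\psmi))|\lesssim h^{r+1}$ uniformly in $\psmi$ and $\n$. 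This already proves the first claim, $\norm{\rs_\infty-\rs_\n}{L^\infty(\domPS)}\lesssim h^{r+1}$.

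For the second claim I would complexify, and this is where the main technical difficulty lies. For sequences $\rho=(\rho_j)$ with $\rho_j>1$ satisfying a smallness condition, the poly-ellipse $\mathcal E_\rho=\bigotimes_j\mathcal E_{\rho_j}$ keeps $\operatorname{Re}a(x,\psmi)$ bounded below — for the exponential coefficient one additionally needs $|\operatorname{Im}b(x,\psmi)|<\pi/2$, again enforced by taking $\rho$ small enough — and keeps $\|a(\cdot,\psmi)\|_{C^r(U)}$ uniformly bounded. Since $a$ depends polynomially, or as the exponential of an affine map, on $\psmi$, and the solution operator of a uniformly coercive problem is analytic in the coefficient, $\psmi\mapsto\pde(\cdot,\psmi)$ and $\psmi\mapsto\pde_\n(\cdot,\psmi)$ extend holomorphically to $\mathcal E_\rho$ as $H^1_0(U)$-valued, respectively finite-element-space-valued, maps. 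The delicate point is that the $H^{r+1}(U)$- and $H^2(U)$-a-priori bounds, hence the $h^{r+1}$ finite-element estimate, must be shown to hold with constants independent of $\psmi$ throughout the complex poly-ellipse and independent of $\n$; this amounts to elliptic regularity theory for non-Hermitian, complex, variable coefficients with $\operatorname{Re}a\ge c>0$ and uniformly bounded $C^r(U)$-norm. Granting this, $w:=\rs_\infty-\rs_\n$ is holomorphic on $\mathcal E_\rho$ with $\sup_{\mathcal E_\rho}|w|\lesssim h^{r+1}$, uniformly in $\n$ and in $\rho$ over the admissible family.

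Finally I would feed this into the sparse polynomial approximation theory. Since $(\|\psi_j\|_{C^r(U)})_j\in\ell^p(\N)$ for every $p>1/(r_{\max}+1-r)$, and this threshold is $<1$, the standard Legendre-coefficient estimates for functions holomorphic on admissible poly-ellipses (\cite{ChkifaCohenSchwab2015}, see also \cite{CohenDevoreSchwab2011}) show that the coefficients $w_\mip$ of $w$ satisfy $(|w_\mip|\prod_j(2\mip_j+1)^{1/2})_\mip\in\ell^p$ with $\ell^p$-norm $\lesssim h^{r+1}$, and — crucially for a statement uniform in $\n$ — that there is a nested family of downward closed index sets $\Lambda_\dvsp$, depending only on the majorant sequence $(\|\psi_j\|_{C^r(U)})_j$ and with $|\Lambda_\dvsp|=\dvsp$, for which $\|w-\sum_{\mip\in\Lambda_\dvsp}w_\mip\leg_\mip\|_{L^\infty(\domPS)}\lesssim h^{r+1}(\dvsp+1)^{-(1/p-1)}$. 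Setting $\vsp_\dvsp:=\operatorname{span}\{\leg_\mip:\mip\in\Lambda_\dvsp\}$, which is downward closed with $\dim\vsp_\dvsp=\dvsp$, and choosing $p$ so close to $1/(r_{\max}+1-r)$ that $1/p-1>\uqsummability$, we conclude $e_{\vsp_\dvsp,1,\infty}(\rs_\infty-\rs_\n)\lesssim(\dvsp+1)^{-\uqsummability}h^{r+1}$ with constants independent of $\n$ and $\dvsp$. The rate is governed by the $\ell^p$-summability of $(\|\psi_j\|_{C^r(U)})_j$, not by the size of the poly-ellipse, so the constraint on $\rho$ from the exponential case does not degrade it.
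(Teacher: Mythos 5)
Your proposal is correct and follows essentially the same route as the paper: bound the difference $\pde-\pde_\n$ by extending it holomorphically to a complex neighborhood of $\domPS$ with bounds uniform in $\n$, apply the finite-element estimate uniformly over that complex domain so the factor $h^{r+1}$ carries through, and then invoke downward-closed sparse polynomial approximation of functions holomorphic on such domains to get the rate $(\dvsp+1)^{-\uqsummability}$ for $\uqsummability<r_{\max}-r$. The only difference is one of packaging: the paper quotes \cite{ChkifaCohenSchwab2015} (Theorem 4.1, Section 5, and Equation (2.27)) for both the uniform holomorphic extension with $H^{r+1}(U)$ bounds and the downward-closed approximation estimate, whereas you sketch deriving these ingredients yourself, correctly flagging the uniform complex-coefficient elliptic regularity as the delicate step that must be granted or cited.
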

\begin{proof}
	
	It was shown in \cite[Theorem 4.1 \& Section 5]{ChkifaCohenSchwab2015} that for each $0\leq r<r_{\max}$ there exists a set $\domPS_{r}\subset \C^{\infty}$, $\domPS\subset \domPS_{r}$ such that
	$\norm{a}{L^{\infty}(\domPS_{r};C^{r}(U))}<\infty$ and such that $\psmi\mapsto \pde(\cdot,\psmi)$ may be extended to a complex differentiable map from $\domPS_{r}$ into $H^{1+r}(U)$ with
		\begin{equation}
		\label{eq:complexbounds}
		\begin{split}
	\norm{\pde}{L^{\infty}(\domPS_{r};H^{1+r}(U))}<\infty
		\end{split}
		\end{equation}
	
For a detailed description of the sets $\domPS_{r}$ we refer to \cite{ChkifaCohenSchwab2015}. For our purposes it suffices to know that the better the summability of $(\norm{\psi_j}{C^{r}(U)})_{j\in\N}$, the larger $\domPS_{r}$ can be chosen; and the larger $\domPS_{r}$ the better the polynomial approximability properties of complex differentiable maps defined on $\domPS_{r}$.
In particular, the results of \cite[Section 2]{ChkifaCohenSchwab2015}, show that when  restricted to the smaller set $\domPS$ such maps may be approximated at algebraic convergence rates within downward closed polynomial subspaces. More specifically, \cite[Equation (2.27)]{ChkifaCohenSchwab2015} shows that if a function $e$ is complex differentiable on $\domPS_{r}$, then for any $\dvsp\in\N$ there exists a downward closed polynomial subspace $\vsp_{\dvsp}$ such that
	\begin{equation*}
	\begin{split}
	\inf_{\tilde{\p}\in \vsp_{\dvsp}\otimes L^2(U)}\norm{e-\tilde{\p}}{L^\infty(\domPS;L^2(U))}\lesssim (\dvsp+1)^{-\alpha} \norm{e}{L^\infty(\domPS_{r};L^2(U))}
	\end{split}
	\end{equation*}
	for all $\alpha<r_{\max}- r$. 
 Applying this estimate with $e:=\pde-\pde_{n}$ shows
	\begin{equation*}
	\begin{split}
	\inf_{\p\in\vsp_{\dvsp}}\norm{(\rs_{\infty}-\rs_{\n})-\p}{L^\infty(\domPS)}&\leq \norm{Q}{} \inf_{\tilde{\p}\in \vsp_{\dvsp}\otimes L^2(U)}\norm{(\pde-\pde_n)-\tilde{\p}}{L^\infty(\domPS;L^2(U))}\\
\lesssim & (\dvsp+1)^{-\alpha} \norm{\pde-\pde_{n}}{L^\infty(\domPS_{r};L^2(U))}.
	\end{split}
	\end{equation*}

By standard finite element analysis we finally obtain
\begin{equation*}
\norm{\pde-\pde_{n}}{L^{\infty}(\domPS_{r};L^2(U))}\leq C h^{r+1}\norm{\pde}{L^{\infty}(\domPS_{r};H^{r+1}(U))}.
\end{equation*}
with  $C=C\big(	\norm{a}{L^{\infty}(\domPS_{r};C^{r}(U))}\big)<\infty$. Combining the previous two estimates with \Cref{eq:complexbounds} concludes the proof.
\end{proof}

\begin{rem}
	Similar results can also be shown for PDEs of parabolic type and for some nonlinear PDEs \cite{ChkifaCohenSchwab2015}.
\end{rem}
\section{Numerical Experiments}
\label{sec:numerics}
To support our theoretical analysis, we performed numerical experiments on linear elliptic parametric PDEs of the form

\begin{equation}
\label{eq:pdenum}
\begin{aligned}
-\nabla \cdot (a(x,\psmi) \nabla \pde(x,\psmi))&=1&&\text{ in }U:=[-1,1]^D\\
\pde(x,\psmi)&=0&&\text{ on } \partial U,
\end{aligned}
\end{equation}
as in \Cref{sec:uq}.
We let
\begin{equation*}
a(x,\psmi)={1 + \|x\|_2^{r} + \|\psmi\|_2^{s}},\quad \psmi\in\domPS:=[-1,1]^d
\end{equation*}
for $r := 1$, $s := 3$, $D:=2$ and $d\in\{2,3,4,6\}$.
Our goal was to approximate the response surface
\begin{equation*}
\psmi\mapsto \rs(\psmi):=\QoI(\pde(\cdot,\psmi)):=0.5 \int_{U}\pde(\cdot,\psmi)\;dx
\end{equation*}
in $L^2(\Gamma)$.
%
%
The numerical scheme we used to solve \Cref{eq:pdenum} employs
centered finite difference approximations to the derivatives with a
constant mesh size, $h$. Such a numerical scheme converges
asymptotically at a rate of $\mathcal O(h^{2})$ in the $L^2$ norm
and requires a computational work of $\mathcal O(h^{-2})$, since the
PDE is two-dimensional. This corresponds to the values
$\beta_s = \beta_w = 2$ and $\gamma=2$ for the parameters in
Assumptions A2 and A3.
To estimate the projection error of our
estimate we evaluate the $L^2$ error norm using Monte Carlo sampling with $M=1000$ samples,
\begin{equation}\label{eq:l2-mc-error}
  \norm{f - S_L(f)}{L^2(\Gamma)}^2 \approx \frac{1}{M} \sum_{j=1}^M
  (f_{L+1}(\psmi_j) - S_L(f)(\psmi_j))^2.
\end{equation}
In our tests we employ both the nonadaptive and the adaptive
algorithms from Sections 4 and 5. As a basis for the nonadaptive
algorithm, we use total degree polynomial spaces
\(
  \vsp_{\dvsp} := \vspan\left\{ \leg_{\eta} : |\eta|_{1} \leq \dvsp \right\},
\)
where $\leg_\eta$ is a tensor product of Legendre
polynomials as in \Cref{sec:adaptive}.  We also compare the multilevel algorithm to the
straightforward, single-level approach, which for a given polynomial
approximation space $\vsp_{\dvsp}$ uses samples from a fixed PDE discretization
level that matches the accuracy of the polynomial best
approximation in $\vsp_{\dvsp}$. To find these matching PDE discretization levels, we consider the complexity curve of the single-level
method as the lower envelope of complexity curves with different
PDE discretization levels. Even though such a method is not practical, the
choice of discretization level for a given tolerance is always
optimal. The random points were sampled from the optimal
distribution as explained in \Cref{sec:optimal-sampling}.

Before presenting the numerical results, let us derive some a-priori
estimates of the complexity of the single-level and multilevel
projection methods.  From \Cref{pro:finite}, if
$a \in C^{r}(U)\otimes C^{s}(\domPS)$, then using finite elements
of order $r$ and mesh size $h$ would yield convergence in the space $F:=C^{s}(\domPS)$ with the values $\sc=\wc=r+1$ of the parameters in \Cref{sec:nonadaptive}, and  optimal solvers
would require the work $\mathcal{O}(h^{-\gamma})$,  $\gamma:=D$. Furthermore, since functions in
$C^{s}(\domPS)$ are approximable by polynomials of total degree less than or equal to $k$ at the rate $\mathcal{O}(k^{-s})$ in the supremum norm \cite{BagbyBosLevenberg2002}, we expect at least $\alpha=s$.
Even though our choice $a(x,\psmi) = 1 + \|  x\|_2^{r} + \| \psmi\|_2^{s}$
satisfies only $a \in C^{r-1, 1}(U)\otimes C^{s-1,1}(\domPS)$, we do not expect different rates than those derived above for $a\in C^{r}(U)\otimes C^{s}(\domPS)$. Finally, the dimension of total degree polynomial spaces $\vsp_{\dvsp}$ equals $\binom{\dvsp+d}{d}$ and asymptotically we have $\binom{\dvsp+d}{d}\lesssim \dvsp^d$.

Thus, we expect the complexity of the single-level method to be
$\mathcal{O}\left(\epsilon^{-\frac{D}{r+1} -
    \frac{d}{s}}\log(\epsilon^{-1})\right)$, while the complexity of
the multilevel method is of
$\mathcal{O}\left({\epsilon^{-\max\left({\frac{D}{r + 1},
          \frac{d}{s}}\right)}} \log(\epsilon^{-1})^{t}\right)$,
where
\[t =
  \begin{cases}
    1 & \frac{D}{r + 1} > \frac{d}{s},\\
    3 + \frac{D}{r+1} & \frac{D}{r + 1} = \frac{d}{s},\\
    2 & \frac{D}{r + 1} < \frac{d}{s}.
  \end{cases}
\]
Hence, for $r=1$ and $s=3$, the complexity of the single-level
method is
$\mathcal{O}\left(\epsilon^{-1 -
    \frac{d}{3}}\log(\epsilon^{-1})\right)$ and the complexity of the
multilevel method is
$\mathcal{O}\left({\epsilon^{-\max({1, \frac{d}{3}})}}
  \log(\epsilon^{-1})^{t}\right)$ where
\[t =
  \begin{cases}
    1, & d < 3,\\
    4, & d = 3,\\
    2, & d > 3.
  \end{cases}
\]

\Cref{fig:kink-work} shows the work estimate as defined in
\cref{eq:workdef} versus the $L^2$ error approximation in
\Cref{eq:l2-mc-error}. The results for the multilevel algorithm displayed there were obtained with the work parameter $\dimp:=d/2$, which we found describes the pre-asymptotic behavior of $\dim \vsp_{\dvsp}=\binom{\dvsp+d}{d}$ better than $\dimp:=d$. The theoretical rates
satisfactorily match the obtained numerical rates, which show an
improvement of the multilevel methods over the single-level method. Note
that the work estimate does \emph{not} include the cost of generating
points or the cost of assembling the projection matrix and computing
the projection. On the other hand, these costs are included in
\Cref{fig:kink-time}, which shows the total running time in seconds of the
three different methods. While these plots still show the same
complexity rates as \Cref{fig:kink-work} for all three methods for
sufficiently small errors, these plots also show the overhead of the
multilevel methods, especially as $d$ increases. The overhead of the
adaptive algorithm for the multilevel method is especially significant
and more work needs to be done to reduce it.

\setlength\figureheight{7.4cm}
\setlength\figurewidth{8.2cm}
\providecommand{\figlabel}{fig:}

\begin{figure}
	\centering
	\begin{subfigure}{0.49\textwidth}
      \renewcommand{\figlabel}{fig:work-est-vs-error-d2}
      \input{./figures/poisson-kink-2/work-est-vs-error.tex}
      \caption{$d=2$}
	\end{subfigure}
	\begin{subfigure}{0.5\textwidth}
      \renewcommand{\figlabel}{fig:work-est-vs-error-d3}
      \input{./figures/poisson-kink-3/work-est-vs-error.tex}
      \caption{$d=3$}
	\end{subfigure}
	\begin{subfigure}{0.49\textwidth}
      \renewcommand{\figlabel}{fig:work-est-vs-error-d4}
      \input{./figures/poisson-kink-4/work-est-vs-error.tex}
      \caption{$d=4$}
	\end{subfigure}
	\begin{subfigure}{0.5\textwidth}
      \renewcommand{\figlabel}{fig:work-est-vs-error-d6}
      \input{./figures/poisson-kink-6/work-est-vs-error.tex}
      \caption{$d=6$}
	\end{subfigure}
	\caption{$L^2([-1,1]^d)$-error, approximated using
      \Cref{eq:l2-mc-error} vs work estimate \Cref{eq:workdef} of
      single-level (SL), multilevel (ML) and adaptive ML (ML adaptive)
      methods for a linear elliptic PDE with non-smooth parameter
      dependence. The grey dotted lines are the complexity curves of
      different runs of the single-level, each with a different PDE
      discretization level. The single-level (SL) complexity curve is
      then the lower envelope of all single-level complexity
      curves. This figure shows the agreement of the numerical results
      with the theoretical rates.}
	\label{fig:kink-work}
  \end{figure}

  \begin{figure}
	\centering
    \begin{subfigure}{0.49\textwidth}
      \renewcommand{\figlabel}{fig:total-time-vs-error-d2}
      \input{./figures/poisson-kink-2/total-time-vs-error.tex}
      \caption{$d=2$}
	\end{subfigure}
	\begin{subfigure}{0.5\textwidth}
      \renewcommand{\figlabel}{fig:total-time-vs-error-d3}
      \input{./figures/poisson-kink-3/total-time-vs-error.tex}
      \caption{$d=3$}
	\end{subfigure}
	\begin{subfigure}{0.49\textwidth}
      \renewcommand{\figlabel}{fig:total-time-vs-error-d4}
      \input{./figures/poisson-kink-4/total-time-vs-error.tex}
      \caption{$d=4$}
	\end{subfigure}
	\begin{subfigure}{0.5\textwidth}
      \renewcommand{\figlabel}{fig:total-time-vs-error-d6}
      \input{./figures/poisson-kink-6/total-time-vs-error.tex}
      \caption{$d=6$}
	\end{subfigure}
	\caption{Similar to \Cref{fig:kink-work}, but showing the total
      running time of the methods instead of their work estimate. The
      discrepancy of the two figures is due to the overhead of
      sampling the points, assembling the projection matrix and
      computing the projection. Moreover, this plot shows the overhead
      of the adaptive algorithm compared to the non-adaptive one.}
	\label{fig:kink-time}
  \end{figure}

{\section{Conclusion}

We have presented a novel multilevel projection method for the approximation of response surfaces using multivariate polynomials and random samples with different accuracies. For this purpose, we have discussed and analyzed various sampling methods for the underlying single-level approximation method. We have then presented theoretical and numerical results on our multilevel projection method for problems in which samples can be obtained at different accuracies. The numerical results show good agreement with the computational gains predicted by our theory. Future work will address the application to problems in uncertainty quantification with infinite-dimensional parameter domains and multi- or infinite-dimensional quantities of interest.}
\paragraph{Acknowledgements} F. Nobile received support from the Center for ADvanced MOdeling Science (CADMOS). R. Tempone and S. Wolfers are members of the KAUST SRI Center for Uncertainty Quantification in Computational Science and Engineering. R. Tempone received support from the KAUST CRG3 Award Ref:2281 and the KAUST CRG4 Award Ref:2584.


\bibliography{library.bib}{}

\begin{thebibliography}{10}

\bibitem{BabuskaTemponeZouraris2004}
Ivo Babuska, Ra{\'u}l Tempone, and Georgios~E Zouraris.
\newblock Galerkin finite element approximations of stochastic elliptic partial
  differential equations.
\newblock {\em SIAM Journal on Numerical Analysis}, 42(2):800--825, 2004.

\bibitem{BagbyBosLevenberg2002}
Thomas Bagby, Len Bos, and Norman Levenberg.
\newblock Multivariate simultaneous approximation.
\newblock {\em Constructive Approximation}, 18(4):569, December 2002.

\bibitem{ChkifaCohenMiglioratiEtAl2015}
Abdellah Chkifa, Albert Cohen, Giovanni Migliorati, Fabio Nobile, and Raul
  Tempone.
\newblock Discrete least squares polynomial approximation with random
  evaluations - application to parametric and stochastic elliptic {PDE}s.
\newblock {\em ESAIM: Mathematical Modelling and Numerical Analysis},
  49(3):815--837, 2015.

\bibitem{ChkifaCohenSchwab2015}
Abdellah Chkifa, Albert Cohen, and Christoph Schwab.
\newblock Breaking the curse of dimensionality in sparse polynomial
  approximation of parametric {PDE}s.
\newblock {\em Journal de Math{\'e}matiques Pures et Appliqu{\'e}es},
  103(2):400--428, 2015.

\bibitem{Cohen2013}
Albert Cohen, Mark~A Davenport, and Dany Leviatan.
\newblock On the stability and accuracy of least squares approximations.
\newblock {\em Foundations of computational mathematics}, 13(5):819--834, 2013.

\bibitem{CohenDevoreSchwab2011}
Albert Cohen, Ronald Devore, and Christoph Schwab.
\newblock Analytic regularity and polynomial approximation of parametric and
  stochastic elliptic {PDE}s.
\newblock {\em Analysis and Applications}, 9(01):11--47, 2011.

\bibitem{cohen2016optimal}
Albert Cohen and Giovanni Migliorati.
\newblock Optimal weighted least-squares methods.
\newblock {\em arXiv}, 2016.

\bibitem{deb2001solution}
Manas~K Deb, Ivo~M Babu{\v{s}}ka, and J~Tinsley Oden.
\newblock Solution of stochastic partial differential equations using galerkin
  finite element techniques.
\newblock {\em Computer Methods in Applied Mechanics and Engineering},
  190(48):6359--6372, 2001.

\bibitem{devore1998nonlinear}
Ronald~A DeVore.
\newblock Nonlinear approximation.
\newblock {\em Acta numerica}, 7:51--150, 1998.

\bibitem{DuTeUl2015}
Dinh D{\~u}ng, Vladimir~N. Temlyakov, and Tino Ullrich.
\newblock {H}yperbolic {C}ross {A}pproximation.
\newblock {\em arXiv}, 2015.

\bibitem{MR2151519}
James~E. Gentle.
\newblock {\em Random number generation and {M}onte {C}arlo methods}.
\newblock Statistics and Computing. Springer, New York, second edition, 2003.

\bibitem{gerstner2003dimension}
Thomas Gerstner and Michael Griebel.
\newblock Dimension--adaptive tensor--product quadrature.
\newblock {\em Computing}, 71(1):65--87, 2003.

\bibitem{GriebelRieger2015}
Michael Griebel and Christian Rieger.
\newblock {Reproducing kernel Hilbert spaces for parametric partial
  differential equations}.
\newblock submitted to SIAM Journal on Uncertainty Quantification, also
  available as INS Preprint No. 1511, 2015.

\bibitem{Haji-AliNobileTamelliniEtAl2015}
Abdul-Lateef Haji-Ali, Fabio Nobile, Lorenzo Tamellini, and Ra{\'u}l Tempone.
\newblock Multi-index stochastic collocation convergence rates for random
  {PDE}s with parametric regularity.
\newblock {\em Foundations of Computational Mathematics}, 16(6):1555--1605,
  2016.

\bibitem{Haji-AliNobileTamelliniEtAl2015a}
Abdul-Lateef Haji-Ali, Fabio Nobile, Lorenzo Tamellini, and Ra{\'u}l Tempone.
\newblock Multi-index stochastic collocation for random {PDE}s.
\newblock {\em Computer Methods in Applied Mechanics and Engineering},
  306:95--122, 2016.

\bibitem{hampton2015coherence}
Jerrad Hampton and Alireza Doostan.
\newblock Coherence motivated sampling and convergence analysis of least
  squares polynomial chaos regression.
\newblock {\em Computer Methods in Applied Mechanics and Engineering},
  290:73--97, 2015.

\bibitem{harbrecht2013multilevel}
Helmut Harbrecht, Michael Peters, and Markus Siebenmorgen.
\newblock Multilevel accelerated quadrature for {PDE}s with log-normally
  distributed diffusion coefficient.
\newblock {\em SIAM/ASA Journal on Uncertainty Quantification}, 4(1):520--551,
  2016.

\bibitem{MR2163199}
Markus Hegland.
\newblock Adaptive sparse grids.
\newblock {\em ANZIAM Journal}, 44((C)):C335--C353, 2002.

\bibitem{heinrich2001multilevel}
Stefan Heinrich.
\newblock Multilevel {M}onte {C}arlo methods.
\newblock In {\em International Conference on Large-Scale Scientific
  Computing}, pages 58--67. Springer, 2001.

\bibitem{KuoScheichlSchwabEtAl2015}
Frances Kuo, Robert Scheichl, Christoph Schwab, Ian Sloan, and Elisabeth
  Ullmann.
\newblock Multilevel quasi-{M}onte {C}arlo methods for lognormal diffusion
  problems.
\newblock {\em Mathematics of Computation}, 2017.

\bibitem{LeMaitreKnio2010}
Olivier Le~Maître and Omar Knio.
\newblock {\em Spectral methods for uncertainty quantification}.
\newblock Springer, 2010.

\bibitem{levin1992christoffel}
Eli Levin and Doron~S Lubinsky.
\newblock Christoffel functions, orthogonal polynomials, and {N}evai's
  conjecture for {F}reud weights.
\newblock {\em Constructive Approximation}, 8(4):463--535, 1992.

\bibitem{liu1996metropolized}
Jun~S Liu.
\newblock Metropolized independent sampling with comparisons to rejection
  sampling and importance sampling.
\newblock {\em Statistics and Computing}, 6(2):113--119, 1996.

\bibitem{liu2008monte}
Jun~S Liu.
\newblock {\em {M}onte {C}arlo strategies in scientific computing}.
\newblock Springer Science \& Business Media, 2008.

\bibitem{mastroianni2000weighted}
Giuseppe Mastroianni and Vilmos Totik.
\newblock Weighted polynomial inequalities with doubling and $a_{\infty}$
  weights.
\newblock {\em Constructive approximation}, 16(1):37--71, 2000.

\bibitem{MiglioratiNobileTempone2015}
Giovanni Migliorati, Fabio Nobile, and Raúl Tempone.
\newblock Convergence estimates in probability and in expectation for discrete
  least squares with noisy evaluations at random points.
\newblock {\em Journal of Multivariate Analysis}, 142:167--182, 2015.

\bibitem{narayan2014christoffel}
Akil Narayan, John Jakeman, and Tao Zhou.
\newblock A {C}hristoffel function weighted least squares algorithm for
  collocation approximations.
\newblock {\em Mathematics of Computation}, 86(306):1913--1947, 2017.

\bibitem{nevai1994generalized}
Paul Nevai, Tam{\'a}s Erd{\'e}lyi, and Alphonse~P Magnus.
\newblock Generalized {J}acobi weights, {C}hristoffel functions, and {J}acobi
  polynomials.
\newblock {\em SIAM Journal on Mathematical Analysis}, 25(2):602--614, 1994.

\bibitem{WolfersNobileTempone2016a}
Fabio Nobile, Raul Tempone, and Soeren Wolfers.
\newblock Sparse approximation of multilinear problems with applications to
  kernel-based methods in {UQ}.
\newblock {\em arXiv. Submitted to Numerische Mathematik}, 2016.

\bibitem{quarteroni1984some}
Alfio Quarteroni.
\newblock Some results of {B}ernstein and {J}ackson type for polynomial
  approximation in $l^p$-spaces.
\newblock {\em Japan Journal of Applied Mathematics}, 1(1):173--181, 1984.

\bibitem{MR0372517}
G{\'a}bor Szeg{\H{o}}.
\newblock {\em Orthogonal polynomials}.
\newblock American Mathematical Society, Providence, R.I., fourth edition,
  1975.
\newblock American Mathematical Society, Colloquium Publications, Vol. XXIII.

\bibitem{MR2946459}
Joel~A Tropp.
\newblock User-friendly tail bounds for sums of random matrices.
\newblock {\em Found. Comput. Math.}, 12(4):389--434, 2012.

\end{thebibliography}
\bibliographystyle{plain}
\end{document}